\newtheorem{theorem}{Theorem}[section]
\theoremstyle{plain}
\newtheorem{lemma}[theorem]{Lemma}
\newtheorem{proposition}[theorem]{Proposition}
\theoremstyle{remark}
\newtheorem{example}[theorem]{Example}
\newmdtheoremenv{exer}{Exercise}[section]
\numberwithin{equation}{section}
\newcommand{\tr}{\operatorname{tr}}
\newcommand{\diam}{\operatorname{diam}}
\newcommand{\dist}{\operatorname{dist}}
\newcommand{\bbR}{\mathbb{R}}
\newcommand{\bbZ}{\mathbb{Z}}
\newcommand{\bbN}{\mathbb{N}}
\newcommand{\calA}{\mathcal{A}}
\newcommand{\calV}{\mathcal{V}}
\newcommand{\calJ}{\mathcal{J}}
\newcommand{\del}{\partial}
\DeclarePairedDelimiter\paren{(}{)}
\DeclarePairedDelimiter\set{\{}{\}}
\DeclarePairedDelimiter\sqbrak{[}{]}
\DeclarePairedDelimiter\abs{|}{|}
\DeclarePairedDelimiter\norm{\Vert}{\Vert}
\DeclarePairedDelimiter\brak{\langle}{\rangle}
\newcommand{\std}{{\rm N}}
\newcommand{\girth}{\operatorname{girth}}
\begin{document}

\title[Gaps between consecutive eigenvalues]{Gaps between consecutive eigenvalues for compact metric graphs}
\author[Borthwick]{David Borthwick}
\address{Department of Mathematics, Emory University, Atlanta, GA 30322}
\email{dborthw@emory.edu}
\author[Harrell]{Evans M. Harrell II}
\address{School of Mathematics, Georgia Institute of Technology, Atlanta, GA 30332}
\email{harrell@math.gatech.edu}
\author[Yu]{Haozhe Yu}
\address{Department of Mathematics, Emory University, Atlanta, GA 30322}
\email{haozhe.yu@emory.edu}
\date{\today}

\subjclass[2010]{34B45, 81Q35}

\begin{abstract}
On a compact metric graph, we consider the spectrum of the Laplacian defined with a mix of standard and Dirichlet vertex conditions. 
A Cheeger-type lower bound on the gap $\lambda_2 - \lambda_1$ is established, with a constant that depends only on
the total length of the graph and minimum edge length.
We also prove some improvements of known upper bounds for eigenvalue gaps and ratios for metric trees and extensions to certain other types of graphs.
\end{abstract}

\maketitle

\section{Introduction}

Estimating the gaps between eigenvalues is a classical problem in spectral theory, dating back to the upper bounds established 
for planar domains by Payne, P\'olya, and Weinberger \cite{PPW:1956}. For the Dirichlet problem on a bounded domain 
$\Omega \subset \bbR^d$, their
``universal bound'' on eigenvalue gaps generalizes to 
\[
\lambda_{n+1} - \lambda_n \le \frac{4}{nd} \sum_{j=1}^n \lambda_j.
\] 
For $\Omega$ convex, a lower bound $\lambda_2-\lambda_1 \ge 4\pi^2/\diam(\Omega)^2$ 
was proven using gradient methods in Singer-Wong-Yau-Yau \cite{SWYY:1985}. For the Dirichlet problem on a 
compact manifold with boundary, $\lambda_2-\lambda_1$ was bounded below in terms of a weighted 
Cheeger constant in Cheng-Oden \cite{ChengOden:1997} and Oden-Sung-Wang \cite{OSW:1999}.

In this paper we study the corresponding problems for the Laplacian on a connected, compact metric graph $\Gamma$, with 
either standard (Kirchhoff-Neumann) or Dirichlet conditions imposed at each vertex. More specifically, the vertex set $\calV$ of $\Gamma$ is
subdivided as $\calV_\std \cup \calV_0$, and $-\Delta$ is the self-adjoint operator associated to the quadratic form 
$\norm{u'}^2$ with domain $\set{u \in H^1(\Gamma): u = 0 \text{ on }\calV_0}$. This definition assigns standard vertex conditions
to the elements of $\calV_\std$.  Because Dirichlet conditions have the effect
of separating edges, all vertices in $\calV_{0}$ are assumed to have degree $1$. Vertices of degree $2$ are effectively
invisible under standard vertex conditions, so for $\calV_\std$ we assume the degree is either $1$ or $\ge 3$.

The spectrum of $-\Delta$ is given by
\[
0 \le \lambda_1 < \lambda_2 \le \lambda_3 \le \dots.
\]
Our main concern is the spectral gap $\lambda_2 - \lambda_1$, but first let us review some of the
existing eigenvalue estimates in this context.

In the case of fully standard vertex conditions, where $\calV_0 = \emptyset$, a variety of interesting eigenvalue 
estimates are known.  Following the notation of \cite{BKKM:2019, BKKM:2022}, we denote the eigenvalues 
under standard vertex conditions by $\lambda^\std_j$.
In terms of the total length $L := \abs{\Gamma}$, the standard eigenvalues satisfy
\begin{equation}\label{std.lower}
\lambda^\std_k \ge \frac{k^2 \pi^2}{4L^2}
\end{equation}
for all $k \ge 2$.  This lower bound was proven for $k=2$ by Nicaise \cite[Thm.~3.1]{Nicaise:1987}, for $k \ge 2$ by
Friedlander \cite[Thm.~1]{Friedlander:2005}, and independently by Kurasov-Naboko \cite{KN:2014} for $k$ even. 
The bound is sharp, and equality for some $k$ implies that $\Gamma$ is a segment if $k=2$ and an equilateral $k$-star if $k > 2$.

Nicaise also established a Cheeger-type lower bound,
\begin{equation}\label{std.cheeger}
\lambda^\std_2 \ge \frac14 h(\Gamma)^2,
\end{equation}
where the metric graph version of the the Cheeger constant is defined  by
\begin{equation}\label{cheeger.def}
h(\Gamma) := \inf \frac{\#S}{\min(\abs{Y_1},\abs{Y_2})},
\end{equation}
with the infimum taken over finite sets $S$ such that $\Gamma\backslash S$ is a disjoint union 
of non-empty open sets $Y_1$ and $Y_2$.  The Cheeger constant satisfies a trivial lower bound
\begin{equation}\label{h.lbound}
h(\Gamma) \ge \frac2{L},
\end{equation}
but this gives a much weaker result than \eqref{std.lower}.
If $D := \diam(\Gamma)$ satisfies $D \le L/2$, then Kennedy et al.~\cite[Thm.~7.2]{KKMM:2016} showed that 
\[
\lambda^\std_2 \ge \frac{1}{2DL}.
\]
This improves on \eqref{std.lower} for graphs of sufficiently small diameter.

Upper bounds for $\lambda^\std_2$ can be proven by a combination of min-max and surgery principles.
For example, if the number of edges of $\Gamma$ is $E \ge 2$, then
\cite[Thm.~4.2]{KKMM:2016} gives the sharp upper bound
\[
\lambda^\std_2 \le \frac{\pi^2E^2}{L^2}.
\]
Estimates based on other geometric properties of $\Gamma$ can be found, for example, in
\cite{BKKM:2017, BKKM:2019, BKKM:2022, BCJ:2021, KKMM:2016, KMN:2013}.

Returning to the general case, if $\calV_0$ is not empty then the arguments leading to \eqref{std.lower}
imply a lower bound
\begin{equation}\label{lambda1.lower}
\lambda_k \ge \frac{k^2 \pi^2}{4L^2}.
\end{equation}
This was proven for $k=1$ in \cite[Thm.~3.1]{Nicaise:1987} and is implicit in the proof of \cite[Thm.~1]{Friedlander:2005} for $k \ge 1$.
Berkolaiko-Kennedy-Kurasov-Mugnolo \cite[Thm.~4.7]{BKKM:2017} showed that, if $\Gamma$ is not a cycle
the bound can be improved for $k \ge E - V_0 + 1$ to
\[
\lambda_k \ge \paren[\Big]{k - \tfrac12(E - V_0 + 1)}^{\!2} \frac{\pi^2}{L^2},
\]
where $V_0 := \#\calV_0$.
The argument for the Cheeger estimate \eqref{std.cheeger} from \cite[Thm.~3.2]{Nicaise:1987}
also yields 
\[
\lambda_1\ge \frac14 h(\Gamma)^2
\]
if $\calV_0 \ne \emptyset$.  

As for upper bounds, a simple test function argument yields 
\begin{equation}\label{lmax.bound}
\lambda_1 \le \frac{\pi^2}{\ell_{\rm max}^2},
\end{equation}
where $\ell_{max}$ is the maximum edge length.  Similarly,
Berkolaiko-Kennedy-Kurasov-Mugnolo \cite[Thm.~1.3]{BKKM:2022} proved an upper bound 
\begin{equation}\label{girth.est}
\lambda_1 \le \frac{\pi^2}{\girth(\Gamma)^2},
\end{equation}
where the \emph{girth} is defined as the minimum cycle length of the graph formed from 
$\Gamma$ by identifying all Dirichlet vertices.  For higher eigenvalues, the same authors also proved
upper bounds in terms of the Betti number $\beta = E - V+1$, which counts the number of
independent cycles in $\Gamma$. In our notation, this yields \cite[Thm.~4.9]{BKKM:2017}
\begin{equation}\label{betti.est}
\lambda_k \le \paren[\Big]{k-\tfrac12 + \tfrac32 E - V_\std - \tfrac12 V_0}^{\!2} \frac{\pi^2}{L^2}.
\end{equation}

With this context established, we turn now to the question of estimates on $\lambda_2 - \lambda_1$ when
$\Gamma$ has at least one Dirichlet vertex.  As far as we are aware, there are no lower bounds in the literature
for this case. The combination of \eqref{lambda1.lower} and \eqref{girth.est} gives no bound on the gap, 
because the girth is obviously bounded by the total length.

By adapting the argument of Cheng and Oden \cite[Prop.~1.3]{ChengOden:1997}, we establish in \S\ref{wcheeger.sec}
a lower bound for $\lambda_2 - \lambda_1$ in terms of a weighted Cheeger constant, with weight given by the first eigenfunction $\phi_1$.
To make use of this bound, we prove a Harnack inequality for $\phi_1$ in \S\ref{phi1.sec}, with a constant depending only 
$L$ and $\ell_0$, the minimum edge length of $\Gamma$. 

The Harnack inequality (Proposition~\ref{phi1.min.prop}) 
implies an envelope estimate for $\phi_1$, which is perhaps of independent interest. Define 
the function 
\begin{equation}\label{upsilon.def}
\Upsilon(q) := \begin{cases} 1, & \dist(q, \calV_0) \ge \ell_0/2, \\
\sin(\tfrac{\pi}{\ell_0} \dist(q, \calV_0)), & \dist(q, \calV_0) < \ell_0/2. \end{cases}
\end{equation}
\begin{theorem}\label{envelope.thm}
Let $\Gamma$ be a compact metric graph with total length $L$ and minimum edge length $\ell_0$.
Suppose $\calV_0 \ne \emptyset$ and the first eigenfunction is normalized so that $\phi_1 \ge 0$ and $\norm{\phi_1} = 1$. 
There exists a constant $c_1(L,\ell_0)>0$ such that 
\[
c_1(L,\ell_0)\Upsilon \le \phi_1 \le \sqrt{\frac{2}{\ell_0}}\Upsilon.
\]
\end{theorem}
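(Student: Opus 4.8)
The plan is to establish the two inequalities separately. The upper bound will follow from elementary ODE analysis on the edges of $\Gamma$ together with the normalization, while the lower bound will be deduced from the Harnack inequality of Proposition~\ref{phi1.min.prop}. Since $\calV_0\neq\emptyset$ and $\Gamma$ is connected, the Dirichlet form is positive definite, so $\lambda_1>0$; write $a:=\sqrt{\lambda_1}$. By \eqref{lambda1.lower} and \eqref{lmax.bound} we have $\pi/(2L)\le a\le\pi/\ell_{\max}\le\pi/\ell_0$, and on every edge $\phi_1$ is of the form $\alpha\cos(ax)+\beta\sin(ax)$. One geometric reduction is used repeatedly: if $\dist(q,\calV_0)<\ell_0/2$ then, because every edge has length at least $\ell_0$, the point $q$ must lie on an edge $e$ one of whose endpoints is a Dirichlet vertex $v_0$, at distance $d:=\dist(q,\calV_0)$ from $v_0$ along $e$; parametrizing $e$ by the distance $x$ from $v_0$, the Dirichlet condition gives $\phi_1|_e(x)=B_e\sin(ax)$ with $B_e\ge 0$, and $a\ell_e\le\pi$ since $a\le\pi/\ell_e$.

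For the upper bound I first claim $M:=\max_\Gamma\phi_1\le\sqrt{2/\ell_0}$. Since $\norm{\phi_1}=1$ we have $M>0$, so the maximum is attained at a point $q^*\notin\calV_0$. If $q^*$ lies in the interior of an edge then $\phi_1'(q^*)=0$; if $q^*$ is a vertex then each derivative of $\phi_1$ at $q^*$ pointing into an incident edge is $\le 0$ (as $\phi_1\le M$ along that edge), and since these sum to zero by the Kirchhoff--Neumann condition they all vanish. In either case there is an edge $e$ on which $\phi_1=M\cos\!\big(a(x-x^*)\big)$ for a suitable interior or endpoint value $x^*$, and $\phi_1\ge 0$ forces $a$ times the distance from $x^*$ to either endpoint of $e$ to be at most $\pi/2$; a direct integration then gives $\int_e\phi_1^2\ge M^2\ell_e/2\ge M^2\ell_0/2$, so $1=\norm{\phi_1}^2\ge M^2\ell_0/2$. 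To upgrade this to $\phi_1\le\sqrt{2/\ell_0}\,\Upsilon$ near $\calV_0$, take $q$ with $d<\ell_0/2$ on the edge $e$ as above. If $a\ell_e\ge\pi/2$ then $\max_e\phi_1=B_e\le M$, whence $\phi_1(q)=B_e\sin(ad)\le\sqrt{2/\ell_0}\,\sin(ad)\le\sqrt{2/\ell_0}\,\sin(\tfrac{\pi}{\ell_0}d)$ using $a\le\pi/\ell_0$ and monotonicity of $\sin$ on $[0,\pi/2)$. If $a\ell_e<\pi/2$ then $\max_e\phi_1=B_e\sin(a\ell_e)\le M$, so $\phi_1(q)=\max_e\phi_1\cdot\frac{\sin(ad)}{\sin(a\ell_e)}$, and the bounds $\sin(ad)\le ad$, $\sin(a\ell_e)\ge\tfrac{2}{\pi}a\ell_e$ (Jordan), $\ell_e\ge\ell_0$, together with the elementary inequality $\sin u\ge u/2$ on $[0,\pi/2]$, give $\frac{\sin(ad)}{\sin(a\ell_e)}\le\frac{\pi d}{2\ell_0}\le\sin(\tfrac{\pi}{\ell_0}d)$, so again $\phi_1(q)\le\sqrt{2/\ell_0}\,\sin(\tfrac{\pi}{\ell_0}d)$.

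For the lower bound, $\norm{\phi_1}=1$ forces $M\ge 1/\sqrt{L}$, and this maximum is necessarily attained at a point with $\dist(\cdot,\calV_0)\ge\ell_0/2$: if it were attained at $q^*$ on a Dirichlet edge $e$ with $d<\ell_0/2$, then $B_e>0$ and, since $0<ad<a\ell_0/2\le\pi/2$, the point $p_e\in e$ at distance $\ell_0/2$ from $v_0$ would satisfy $\phi_1(p_e)=B_e\sin(a\ell_0/2)>B_e\sin(ad)=M$, a contradiction. Proposition~\ref{phi1.min.prop} then provides a constant $c_0(L,\ell_0)>0$ with $\phi_1(q)\ge c_0$ whenever $\dist(q,\calV_0)\ge\ell_0/2$; as $\Upsilon\equiv 1$ there, this gives the claim outside the $\ell_0/2$-neighborhood of $\calV_0$. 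For $q$ with $d<\ell_0/2$, lying on $e$ as above, the same $p_e$ has $\dist(p_e,\calV_0)=\ell_0/2$, so $\phi_1(p_e)=B_e\sin(a\ell_0/2)\ge c_0$. Therefore
\[
\phi_1(q)=B_e\sin(ad)\ge c_0\,\frac{\sin(ad)}{\sin(a\ell_0/2)}\ge c_0\cdot\frac{2d}{\ell_0}\ge\frac{2c_0}{\pi}\sin\!\Big(\tfrac{\pi}{\ell_0}d\Big),
\]
where the middle inequality uses concavity of $\sin$ on $[0,\pi/2]$ (noting $a\ell_0/2\le\pi/2$) and the last uses $\sin u\le u$. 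Taking $c_1(L,\ell_0):=\tfrac{2}{\pi}c_0(L,\ell_0)$ gives $c_1\Upsilon\le\phi_1$ on all of $\Gamma$.

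The main obstacle is obtaining the sharp constant $\sqrt{2/\ell_0}$ in the upper bound rather than merely some constant depending on $L$ and $\ell_0$: this rests on the fact that at any maximizing point---vertex or not---$\phi_1$ agrees with $M\cos(a\,\cdot\,)$ along an entire edge of length at least $\ell_0$, which in turn needs the Kirchhoff--Neumann argument at maximizing vertices and the positivity-forced bound $a\ell_e\le\pi$. The remaining trigonometric comparisons near $\calV_0$ are routine but must be arranged so that the constants line up. All genuinely quantitative information about how small $\phi_1$ can be away from $\calV_0$ is packaged into Proposition~\ref{phi1.min.prop}, whose proof (via the Harnack inequality) is the substantive input here.
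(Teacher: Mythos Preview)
Your overall strategy mirrors the paper's: the upper bound comes from a sup-norm estimate on $\phi_1$ combined with sine comparisons on Dirichlet edges, and the lower bound comes from Proposition~\ref{phi1.min.prop} together with analogous comparisons. Your argument for $M\le\sqrt{2/\ell_0}$---integrating $\phi_1^2$ over a single edge through the maximizer, using that positivity forces $a\cdot(\text{distance to endpoints})\le\pi/2$---is in fact a little cleaner than the paper's version, which instead integrates over a path of length $\pi/(2\sigma)$ and then invokes the sharper bound \eqref{lam1.ell0.bd}.

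There is, however, a real gap in your lower bound. You assert that Proposition~\ref{phi1.min.prop} gives $\phi_1(q)\ge c_0$ whenever $\dist(q,\calV_0)\ge\ell_0/2$, but the proposition only controls $\phi_1$ on the set $\Gamma_1$, obtained by deleting from each Dirichlet edge the interval $I_j$ on which $\phi_1'>0$. The set $W=\{\dist(\cdot,\calV_0)\ge\ell_0/2\}$ is generically strictly larger: your reference point $p_e$, at distance exactly $\ell_0/2$ from a Dirichlet vertex, lies in $I_j$ (hence outside $\Gamma_1$) whenever $\sigma\ell_0/2<\pi/2$, i.e.\ whenever $\sigma<\pi/\ell_0$, which holds except in the borderline Dirichlet-interval case. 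Since your near-$\calV_0$ estimate rests on $\phi_1(p_e)\ge c_0$, the argument is circular at that step. The repair is routine and is precisely what the paper does via \eqref{outer.bound}: on $I_j$ one has $\phi_1(x_j)=A_j\sin(\sigma x_j)$ with the far endpoint $t_j$ of $I_j$ lying in $\Gamma_1$, so $A_j\sin(\sigma t_j)\ge m_1$ and hence $A_j\ge m_1$; this yields $\phi_1(p_e)\ge m_1\sin(\sigma\ell_0/2)\ge m_1\sin(\pi\ell_0/(4L))$, after which your chain of inequalities goes through. You should also note, as the paper does, that Proposition~\ref{phi1.min.prop} requires $\Gamma_1$ to contain more than one point, so the finitely many exceptional graphs (interval, equilateral star) need a separate explicit line.
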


The combination of the weighted Cheeger inequality and the estimates of $\phi_1$ 
lead to the following result, whose proof is given in \S\ref{lowerbd.sec}:
\begin{theorem}\label{wcheeger.thm}
Let $\Gamma$ be a metric graph with $\calV_0 \ne \emptyset$. 
There exists a constant $C(L,\ell_0)>0$, depending only on the 
total length $L$ and minimum edge length $\ell_0$, such that
\begin{equation}\label{cll.lower}
\lambda_2 - \lambda_1 \ge C(L,\ell_0).
\end{equation}
\end{theorem}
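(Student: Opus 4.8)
The plan is to deduce \eqref{cll.lower} from two facts already in hand: the weighted Cheeger lower bound for $\lambda_2-\lambda_1$ proved in \S\ref{wcheeger.sec}, and the two-sided envelope estimate of Theorem~\ref{envelope.thm}. Normalize $\phi_1\ge 0$ with $\norm{\phi_1}=1$, write $d\mu:=\phi_1^2\,dx$ for the resulting probability measure on $\Gamma$, and let $h_{\phi_1}(\Gamma)$ be the weighted Cheeger constant obtained from \eqref{cheeger.def} by replacing $\#S$ with $\sum_{q\in S}\phi_1(q)^2$ and each $\abs{Y_i}$ with $\mu(Y_i)$. The inequality of \S\ref{wcheeger.sec} bounds $\lambda_2-\lambda_1$ below by a fixed positive multiple of $h_{\phi_1}(\Gamma)^2$, and the constant $c_1=c_1(L,\ell_0)$ from Theorem~\ref{envelope.thm} depends only on $L$ and $\ell_0$; so it suffices to prove a lower bound $h_{\phi_1}(\Gamma)\ge 2c_1^2$, which then gives \eqref{cll.lower} with $C(L,\ell_0)$ a fixed multiple of $c_1(L,\ell_0)^4$.

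First I would record the geometry forced by the degree-one hypothesis on $\calV_0$. Put $\Gamma_0:=\set{q:\dist(q,\calV_0)\ge\ell_0/2}$, the region on which $\Upsilon\equiv 1$. Since every edge has length at least $\ell_0$, edge midpoints lie in $\Gamma_0$, so $\Gamma_0\ne\emptyset$. Since any path joining two distinct Dirichlet vertices must traverse a complete edge, the complement $\Gamma\setminus\Gamma_0$ is a disjoint union of pendant sub-segments $P_v$, one for each $v\in\calV_0$, where $P_v$ consists of the points of the single edge at $v$ lying within distance $\ell_0/2$ of $v$. And trimming these pendants leaves $\Gamma$ connected — each $P_v$ is a proper initial segment of an edge dangling from a degree-one vertex — so $\Gamma_0$ is connected.

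With these structural facts I would estimate the Cheeger ratio $\sum_{q\in S}\phi_1(q)^2\big/\min(\mu(Y_1),\mu(Y_2))$ for an arbitrary admissible cut $\Gamma\setminus S=Y_1\sqcup Y_2$, in two cases. If some $q\in S$ satisfies $\dist(q,\calV_0)\ge\ell_0/2$, then $\phi_1(q)^2\ge c_1^2\,\Upsilon(q)^2=c_1^2$ by Theorem~\ref{envelope.thm}, while $\min(\mu(Y_1),\mu(Y_2))\le\tfrac12$, so the ratio is at least $2c_1^2$. Otherwise $S\cap\Gamma_0=\emptyset$; then $\Gamma_0$, being connected and nonempty, is contained in one of $Y_1,Y_2$, and the other piece $Y$ lies inside $\bigsqcup_v P_v$. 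Writing $Y=\bigsqcup_v Y^v$ with $Y^v\subset P_v$ and putting $t_v:=\sup\set{\dist(q,v):q\in Y^v}\le\ell_0/2$, the fact that $S$ separates $Y$ from $\Gamma_0$ produces a point $q_v\in S$ on the edge at $v$ with $t_v\le\dist(q_v,v)$; such a $q_v$ still lies in $P_v$, so $\dist(q_v,\calV_0)=\dist(q_v,v)<\ell_0/2$, and monotonicity of $\sin$ on $[0,\ell_0/2]$ gives $\phi_1(q_v)^2\ge c_1^2\sin^2(\pi t_v/\ell_0)$. On the other hand the upper envelope, together with the elementary inequality $\int_0^t\sin^2(\pi s/\ell_0)\,ds\le\tfrac{\ell_0}{4}\sin^2(\pi t/\ell_0)$ for $0\le t\le\ell_0/2$, gives $\mu(Y^v)\le\tfrac12\sin^2(\pi t_v/\ell_0)\le\tfrac1{2c_1^2}\phi_1(q_v)^2$. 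The points $q_v$ lie in distinct pendant segments and so are distinct; summing over $v$ yields $\mu(Y)\le\tfrac1{2c_1^2}\sum_{q\in S}\phi_1(q)^2$, and since $\min(\mu(Y_1),\mu(Y_2))\le\mu(Y)$ the ratio is again at least $2c_1^2$. Taking the infimum over $S$ finishes the proof.

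The second case is the step I expect to be the main obstacle: there both the numerator and the denominator of the Cheeger ratio degenerate as the cut approaches $\calV_0$, so neither bound of Theorem~\ref{envelope.thm} can be used crudely. The resolution is that on the pendant regions $\phi_1$ is pinched between constant multiples of $\sin(\pi\dist(\cdot,\calV_0)/\ell_0)$, which reduces everything to the sharp one-dimensional comparison above, and that the degree-one hypothesis is exactly what confines the small side of any cut to a disjoint union of pendant segments. A secondary point to check is the inequality $\int_0^t\sin^2(\pi s/\ell_0)\,ds\le\tfrac{\ell_0}{4}\sin^2(\pi t/\ell_0)$ on $[0,\ell_0/2]$, with equality at the right endpoint: both sides vanish at $t=0$ and agree at $t=\ell_0/2$, and the difference is increasing and then decreasing on the interval, so it is nonnegative throughout. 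The single-segment graph with two Dirichlet endpoints fits the same scheme and, if one prefers, can be checked directly.
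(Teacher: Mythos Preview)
Your argument is correct, and the overall strategy matches the paper's: combine the weighted Cheeger inequality of \S\ref{wcheeger.sec} with the two-sided control on $\phi_1$ coming from the Harnack estimate (packaged as Theorem~\ref{envelope.thm}). The execution differs in one respect. The paper does not bound $h_{\phi_1}(\Gamma)$ directly; instead \S\ref{lowerbd.sec} proves an intermediate comparison (Proposition~\ref{hphi.prop}) of the form
\[
h_{\phi_1}(\Gamma)\ \ge\ \Bigl(\tfrac{m_1}{M_1}\sin(\sigma\ell_0/2)\Bigr)^{2} h(\Gamma),
\]
and only then invokes the trivial bound $h(\Gamma)\ge 2/L$. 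The case analysis there is a three-case reduction using the auxiliary function $f(x)=\sin^2(\sigma x)\big/\!\int_0^x\sin^2(\sigma t)\,dt$ and the cutoff set $W=\Gamma_0$; your two-case split and the sharp inequality $\int_0^t\sin^2(\pi s/\ell_0)\,ds\le\tfrac{\ell_0}{4}\sin^2(\pi t/\ell_0)$ play the same role more directly. What your route buys is brevity: you get $h_{\phi_1}(\Gamma)\ge 2c_1^2$ in one stroke and hence $\lambda_2-\lambda_1\ge c_1(L,\ell_0)^4$. What the paper's route buys is an explicit comparison between the weighted and unweighted Cheeger constants, which is of independent interest and keeps the geometric quantity $h(\Gamma)$ visible in the final bound (so one can do better than $2/L$ on graphs with large Cheeger constant).
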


For fully standard boundary conditions, $\lambda_1=0$ and the corresponding gap estimate is \eqref{std.lower}, depending on $L$ alone.
The following cases show that dependence on both $L$ and $\ell_0$ is required for graphs with at least one Dirichlet vertex.

\begin{figure}
\begin{overpic}[scale=.5]{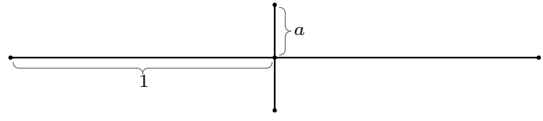}
\put(24.5,5){$\scriptstyle 1$}
\put(53.5,14.7){$\scriptstyle a$}
\end{overpic}
\caption{A $4$-star graph as in Example~\ref{star4.ex}.}\label{star4.fig}
\end{figure}

\begin{figure}
\begin{overpic}[scale=.5]{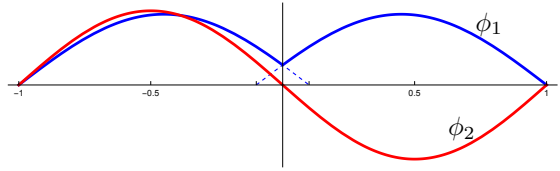}
\put(85,25){$\phi_1$}
\put(80,6){$\phi_2$}
\end{overpic}
\caption{A comparison of the first two eigenfunctions for the $4$-star graph of Example~\ref{star4.ex}.}\label{star4_phi12.fig}
\end{figure}

\begin{example}\label{star4.ex}
Let $\Gamma$ be a star graph with four edges, two of length $1$ and two of length $a <1$, as shown in 
Figure~\ref{star4.fig}. Dirichlet conditions are imposed at the four external vertices.  
Note that the total length $L=2(1+a) < 4$, $\ell_0 = a$, and the diameter is a constant $D=2$.

Define a coordinate $x$ so that 
the longer edges are parametrized by $x \in [0,1]$ and the shorter edges by $x \in [1,1+a]$.
In terms of this coordinate, the first eigenfunction can be written as 
\begin{equation}\label{phi1.star1a}
\phi_1(x) = \sin\paren*{\frac{\pi x}{1+a}},
\end{equation}
yielding
\[
\lambda_1 = \frac{\pi^2}{(1+a)^2}.
\]
The second eigenfunction vanishes on the short edges and is proportional to $\sin(\pi x)$ on the long edges, so that
$\lambda_2 = \pi^2$.  The spectral gap is thus
\[
\lambda_2 - \lambda_1 = \pi^2 \paren*{1- \frac{1}{(1+a)^2}},
\]
which is $\sim 2\pi^2a$ as $a \to 0$. 
\end{example}

\begin{example}\label{star15.ex}
Suppose $\Gamma$ is a star graph with one edge of length 2 and $k$ edges of length $1$, with Dirichlet vertex conditions
on all external vertices, as shown in Figure~\ref{star15.fig}.  Here $L = 2 + k$, $\ell_0 = 1$, and $D = 3$.

\begin{figure}
\begin{overpic}[scale=.5]{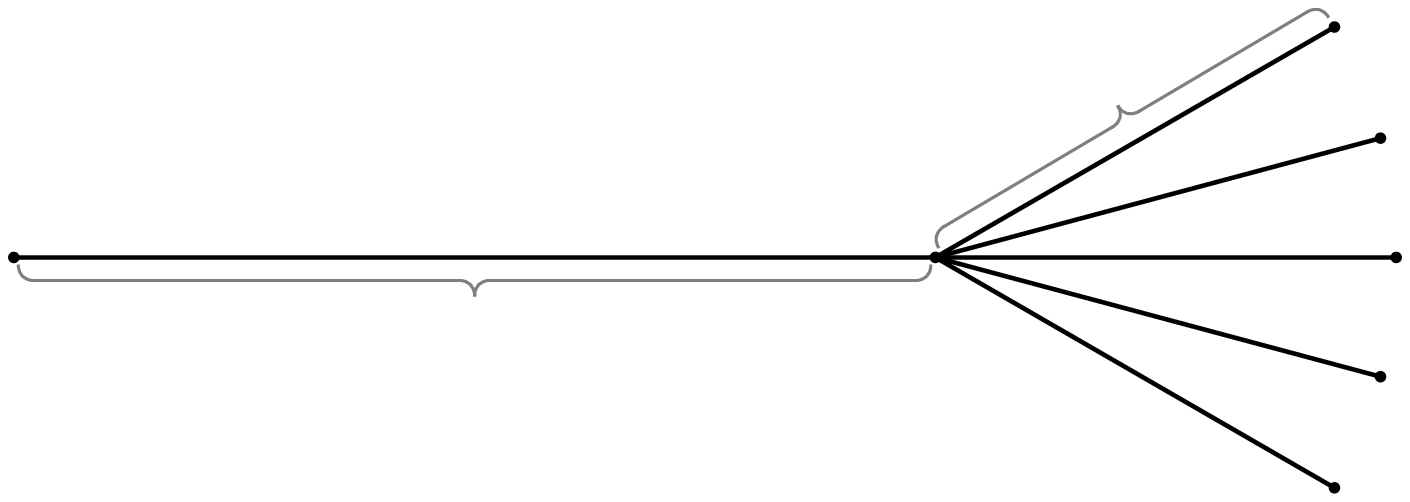}
\put(32.7,11.5){$\scriptstyle 2$}
\put(78,28){$\scriptstyle 1$}
\end{overpic}
\caption{A star graph as in Example~\ref{star15.ex}.}\label{star15.fig}
\end{figure}

\begin{figure}
\begin{overpic}[scale=.5]{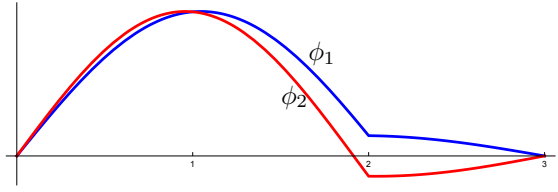}
\put(55,23){$\phi_1$}
\put(50,15){$\phi_2$}
\end{overpic}
\caption{Comparison of the first two eigenfunctions from Example~\ref{star15.ex}.}\label{star15_phi12.fig}
\end{figure}

For the two lowest eigenvalues, it suffices to consider eigenfunctions which do not vanish at the central vertex, 
and thus must take the same values on each smaller edge.  
We can thus use the linear coordinate $x \in [0,3]$, with the interior vertex located at  $x=2$.
Suppose an eigenfunction is given by 
\begin{equation}\label{star15.phi}
\phi(x) = \begin{cases} \sin (\sigma x), & x \in [0,2], \\
c \sin \sigma(3-x), & x \in [2,3].  \end{cases}
\end{equation}
The continuity and vertex conditions give $\sin (2\sigma) = c \sin \sigma$ and $\cos(2\sigma) = kc \cos \sigma$.
With double-angle formulas, this reduces to $(\tan \sigma)^2 = 2k+1$, yielding 
\[
\sigma \in \arctan \sqrt{2k+1} + \pi \bbZ.
\]
On the other hand, eigenfunctions which vanish at the central vertex have eigenvalues in $(\pi \bbN)^2$.
The first two eigenvalues are thus given by
\[
\lambda_1 = \paren*{\arctan \sqrt{2k+1}}^2, \quad \lambda_2 = \paren*{\pi - \arctan \sqrt{2k+1}}^2,
\]
with values just below and above $\pi^2/4$. 
For large $k$, we have $\lambda_2 - \lambda_1 \sim 2\pi/\sqrt{k}$.  In this example, the quantities $\ell_0 = 1$ and $D = 3$ are fixed,
while $L = k+2$.
\end{example}

Now let us turn to the issue of upper bounds on the gaps or ratios between consecutive eigenvalues.
For a general graph $\Gamma$ with $\calV_0 \ne \emptyset$, we have at least a bound deduced
from \eqref{lambda1.lower} and \eqref{betti.est}:
\begin{equation}\label{betti.ratio}
\frac{\lambda_{n+1}}{\lambda_n} \le (2n+1 + 3E - 2V_N - V_0)^2,
\end{equation}
provided $\Gamma$ is not a cycle graph.
The ratio $\lambda_2/\lambda_1$ can indeed be unbounded in general, as is 
demonstrated, for instance, in this paraphrase of \cite[Example~1.2]{DH:2010}:
\begin{example}\label{balloon.ex}
Consider an equilateral ``balloon'' graph, consisting of a pumpkin graph with $k$ edges of length 1, with a pendant 
edge attached at one vertex, as illustrated in Figure~\ref{balloon.fig}.  
Dirichlet conditions are imposed at the endpoint of the free edge.
The lowest two eigenvalues are equal on all edges of the pumpkin, so we can use the linear parameter $x \in [0,2]$,
with the range $[0,1]$ corresponding to the free edge.  For an eigenfunction of the form
\[
\phi(x) = \begin{cases} \sin (\sigma x), & x \in [0,1], \\
c \cos \sigma(2-x), & x \in [1,2],  \end{cases}
\]
the continuity and vertex conditions give $(\tan \sigma)^2 =1/k$.  The lowest two solutions give
\[
\lambda_1 = \arctan(1/\sqrt{k})^2, \qquad \lambda_2 = (\pi - \arctan(1/\sqrt{k}))^2,
\]
and hence
\[
\frac{\lambda_2}{\lambda_1} = \paren*{\frac{\pi}{\arctan(1/\sqrt{k})} - 1}^{\!2}.
\]
This gives $\lambda_2/\lambda_1 = 25$ for $k=3$, and $\lambda_2/\lambda_1 \sim \pi^2 k \text{ as }k \to \infty$.
\end{example}

\begin{figure}
\includegraphics[scale=.5,grid]{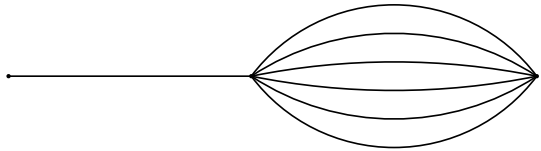}
\caption{An equilateral balloon graph with $k$ = 6, for which $\lambda_2/\lambda_1 \approx 50$.}\label{balloon.fig}
\end{figure}

\begin{figure}
\begin{overpic}[scale=.5]{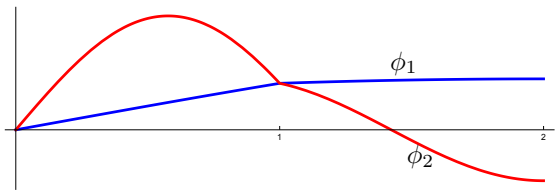}
\put(70,22){$\phi_1$}
\put(73,5){$\phi_2$}
\end{overpic}
\caption{Comparison of the first two eigenfunctions from Example~\ref{balloon.ex}.}\label{balloon_phi12.fig}
\end{figure}

In addition to showing that $\lambda_2/\lambda_1$ can be arbitrarily large, Example~\ref{balloon.ex} shows
the need for the cycle restriction in \eqref{betti.ratio}, as the balloon is a cycle graph.
The right side of \eqref{betti.ratio} reduces to $(3k+1)^2$ for this case, and so
the inequality fails as the ratio approaches $\pi^2$ for large $k$

For a metric tree $\Gamma$ with Dirichlet vertices on the external vertices, 
Nicaise \cite[Thm.~4.3]{Nicaise:1987} proved 
\begin{equation}\label{lratio.nicaise} 
\frac{\lambda_2}{\lambda_1} \le 2 + \sqrt{5},
\end{equation}
along with a bound analogous to Payne-P\'olya-Weinberger \cite{PPW:1956}: for $n \ge 2$,
\begin{equation}\label{nicaise.ppw}
\lambda_{n+1} - \lambda_n \le \frac{4}{n} \sum_{j=1}^n \lambda_j.
\end{equation}
The bound \eqref{nicaise.ppw} is derived using the approach that Hile-Protter \cite{HP:1980} developed for 
Dirichlet eigenvalues of domains in $\bbR^n$.
As in that source, the bound \eqref{nicaise.ppw} is the corollary of a more complicated but sharper estimate.
Again, for a metric tree $\Gamma$ with Dirichlet vertices on the external vertices,
\begin{equation}\label{nicaise.hp}
\lambda_{n+1} \le \sigma,
\end{equation}
where $\sigma$ denotes the unique root in $[\lambda_n, \infty)$ of the equation 
\begin{equation}\label{hp.sigma}
\sum_{j=1}^n \frac{\lambda_j}{\sigma - \lambda_j} = \frac{n}4.
\end{equation}
The bound \eqref{nicaise.ppw} is deduced from \eqref{nicaise.hp} by using $\lambda_{n+1} - \lambda_n \le \sigma - \lambda_j$ in \eqref{hp.sigma}.

In \S\ref{tree.sec}, we will show that the estimation techniques of Harrell-Stubbe \cite{HS:1997}, based on commutator formulas,
can be applied to metric trees by combining ideas from Nicaise \cite{Nicaise:1987} and Demirel-Harrell \cite{DH:2010}.  
This gives a more general bound which includes \eqref{nicaise.hp} as a special case. 

In the final section, \S\ref{extension.sec}, we consider extensions of the upper bounds on the ratio $\lambda_2/\lambda_1$
to metric graphs which are not trees.  In particular, we will see that the bound $\lambda_2/\lambda_1 \le 5$ continues to hold
for graphs consisting of trees to which pendant graphs are attached, provided the pendants are sufficiently small and include 
at least one Dirichlet vertex.

\section{Weighted Cheeger constant}\label{wcheeger.sec}

As in the introduction, $\Gamma$ denotes a compact, connected metric graph, and the Laplacian $-\Delta$
is defined with vertex conditions according to the decomposition $\calV = \calV_\std \cup \calV_0$.
The eigenvalues $\set{\lambda_j}$ are written in ascending order, starting from $\lambda_1 \ge 0$.
We may assume that the corresponding eigenfunctions $\phi_j$ are real and that 
$\phi_1 > 0$ away from $\calV_0$.

In this section we will establish a lower bound for $\lambda_2 - \lambda_1$ by adapting the weighted 
Cheeger constant from Cheng and Oden \cite{ChengOden:1997} 
to the metric graph setting. Given a continuous function $\phi:\Gamma \to \bbR$, we define
\begin{equation}\label{wcheeger.def}
h_\phi(\Gamma) := \inf \frac{\sum_S \phi^2}{\min(\int_{Y_1} \phi^2,\int_{Y_2} \phi^2)},
\end{equation}
where $S \subset \Gamma$ is a finite subset such that $\Gamma\backslash S$ is a disjoint union 
of non-empty open sets $Y_1$ and $Y_2$.  This decomposition, called a Cheeger cut, is illustrated in Figure~\ref{chcut.fig}.
Note that \eqref{wcheeger.def} agrees with \eqref{cheeger.def} when $\phi$ is constant.

\begin{figure}
\begin{overpic}[scale=.6]{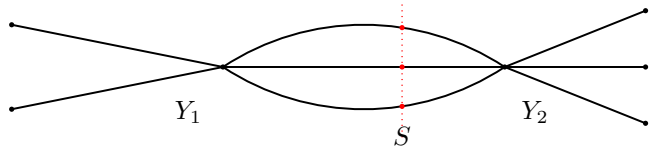}
\put(26,3){$Y_1$}
\put(60,-1){$S$}
\put(80,3){$Y_2$}
\end{overpic}
\caption{A Cheeger cut dividing the graph into two components.}\label{chcut.fig}
\end{figure}

\begin{theorem}\label{hphi.thm}
In the setting described above,
\[
\lambda_2 - \lambda_1 \ge \frac14 h_{\phi_1}(\Gamma)^2.
\]
\end{theorem}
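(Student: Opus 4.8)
The plan is to adapt the classical Cheeger argument to the weighted setting, where the natural substitute for the ground state is the first eigenfunction $\phi_1$ itself. Write $\lambda_2 - \lambda_1$ via the Rayleigh quotient on the $\phi_1$-orthogonal complement: if $u \in H^1(\Gamma)$ vanishes on $\calV_0$ and satisfies $\int_\Gamma u \,\phi_1 = 0$, then $\lambda_2 \le \norm{u'}^2/\norm{u}^2$. Now substitute $u = \phi_1 \, g$ for a function $g$ to be chosen. Using the eigenvalue equation $-\phi_1'' = \lambda_1 \phi_1$ on each edge and the standard vertex conditions at $\calV_\std$, an integration by parts (the ``ground-state transform'') gives the identity
\[
\norm{(\phi_1 g)'}^2 = \lambda_1 \norm{\phi_1 g}^2 + \int_\Gamma \phi_1^2 \, (g')^2,
\]
with no boundary terms: at $\calV_0$ the factor $\phi_1$ vanishes, and at $\calV_\std$ the sum of $\phi_1'$ over incident edges vanishes while $g$ is continuous. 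Hence
\[
\lambda_2 - \lambda_1 \le \frac{\int_\Gamma \phi_1^2 \, (g')^2}{\int_\Gamma \phi_1^2 \, g^2},
\]
valid for any continuous piecewise-$H^1$ function $g$ with $\int_\Gamma g \, \phi_1^2 = 0$. So the problem reduces to constructing a good test function $g$ for the measure $d\mu := \phi_1^2 \, dx$.

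Next I would run the Cheeger co-area argument with respect to $\mu$. Take a Cheeger cut $(S, Y_1, Y_2)$ that nearly achieves $h_{\phi_1}(\Gamma)$, and build $g$ to be (a constant multiple of a truncation of) the signed distance to $S$, sloped $+1$ on the side of $Y_1$ and $-1$ on the side of $Y_2$, normalized by an additive constant so that $\int_\Gamma g \, \phi_1^2 = 0$; this normalization is possible because $g$ can be continuously shifted. With $\abs{g'} = 1$ a.e., the numerator becomes $\mu(\Gamma) = 1$ after normalization $\norm{\phi_1} = 1$, so one needs a lower bound $\int_\Gamma \phi_1^2 \, g^2 \ge 4 / h_{\phi_1}(\Gamma)^2$, or more precisely one wants to show $\int (g')^2 \phi_1^2 \le \tfrac14 h_{\phi_1}^2 \int g^2 \phi_1^2$. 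The cleanest route is the usual one: set $f := g^2 \operatorname{sgn}(g)$ (so $f \ge 0$ on the $Y_1$ side, $\le 0$ on the $Y_2$ side), and apply the co-area / layer-cake formula to $\int_\Gamma \abs{f'} \, d\mu$. For each level $t$, the set $\{f > t\}$ (resp. $\{f < -t\}$) has boundary a finite vertex set giving a valid Cheeger cut, so $\mu(\{\abs{f} > t\} \cap \text{finite boundary}) \ge h_{\phi_1} \min(\mu\{f>t\}, \mu\{f<-t\})$ — here the orthogonality $\int f$-type balance is what lets us compare against the $\min$. Integrating in $t$ yields $\int_\Gamma \abs{f'}\, d\mu \ge h_{\phi_1} \int_\Gamma \abs{f} \, d\mu$ (possibly after choosing the additive constant in $g$ to split $\mu$-mass favorably, which is exactly where the $\int g\phi_1^2 = 0$ condition is used, via a median argument).

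Finally, combine the two ingredients. From $f = g^2 \operatorname{sgn}(g)$ we get $\abs{f'} = 2\abs{g}\,\abs{g'}$ a.e., so $\int \abs{f'} d\mu = 2 \int \abs{g}\,\abs{g'}\, d\mu \le 2\paren[\big]{\int (g')^2 d\mu}^{1/2}\paren[\big]{\int g^2 d\mu}^{1/2}$ by Cauchy–Schwarz, while $\int \abs{f}\, d\mu = \int g^2 \, d\mu$. Chaining with the co-area bound gives
\[
h_{\phi_1}(\Gamma) \int_\Gamma g^2 \, d\mu \le 2 \paren[\bigg]{\int_\Gamma (g')^2 \, d\mu}^{\!1/2} \paren[\bigg]{\int_\Gamma g^2 \, d\mu}^{\!1/2},
\]
hence $\int (g')^2 d\mu \ge \tfrac14 h_{\phi_1}(\Gamma)^2 \int g^2 d\mu$; wait — we need the reverse, so instead one argues $\int(g')^2 d\mu / \int g^2 d\mu$ is the quantity bounding $\lambda_2-\lambda_1$ from above and we want it \emph{large}, which is automatic from the construction $\abs{g'}=1$; the co-area inequality is applied to deduce $\int g^2 d\mu \le 4/h_{\phi_1}^2 \cdot \int (g')^2 d\mu$, i.e. $\lambda_2 - \lambda_1 \ge \int(g')^2 d\mu / \int g^2 d\mu \ge \tfrac14 h_{\phi_1}(\Gamma)^2$ after optimizing the cut. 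The main obstacle is the bookkeeping at the vertices: verifying that the level sets of $f$ always have finite boundary that constitutes a legitimate Cheeger cut (no issues at vertices of degree $\ge 3$ where $g$ may have a corner, and correct handling of the Dirichlet vertices where $\phi_1 = 0$ so those endpoints carry no $\mu$-mass), and confirming that the ground-state substitution produces no stray boundary terms under the mixed vertex conditions. Both are routine but must be done carefully edge-by-edge.
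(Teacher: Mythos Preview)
Your ingredients are the same as the paper's --- ground-state transform plus co-area plus Cauchy--Schwarz --- but the logical chain is pointed the wrong way, and you notice this yourself (``wait --- we need the reverse'') without actually repairing it.

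The inequality you derive from the ground-state transform,
\[
\lambda_2 - \lambda_1 \;\le\; \frac{\int_\Gamma (g')^2\,d\mu}{\int_\Gamma g^2\,d\mu}
\quad\text{for all admissible }g,
\]
is an \emph{upper} bound on the gap. Constructing a specific $g$ (your signed-distance-to-the-cut function) can therefore only produce an upper bound; that entire paragraph is a Buser-type argument, not a Cheeger one, and should be dropped. Meanwhile your co-area/Cauchy--Schwarz computation correctly yields the \emph{lower} bound $\int (g')^2\,d\mu \ge \tfrac14 h_{\phi_1}^2 \int g^2\,d\mu$ for every admissible $g$. But chaining an upper bound for $\lambda_2-\lambda_1$ with a lower bound for the same Rayleigh quotient gives nothing; the line ``$\lambda_2-\lambda_1 \ge \int(g')^2 d\mu/\int g^2 d\mu$'' is never justified.

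The missing step is that the variational infimum is \emph{attained}: the choice $g = \phi_2/\phi_1$ gives exact equality $\lambda_2 - \lambda_1 = \int_\Gamma (g')^2\,d\mu \big/ \int_\Gamma g^2\,d\mu$ (this is the Thompson--Kac ``folklore'' principle, and is what the paper proves directly by expanding $\|\phi_2'\|^2 = \|f'\phi_1 + f\phi_1'\|^2$ and integrating by parts). Once you have that, you apply your co-area lemma to \emph{this particular} $g$, not to one you build from a cut. So: keep your first and last paragraphs, delete the middle one, and replace ``construct a good test function'' with ``take $g=\phi_2/\phi_1$, for which the ground-state identity becomes an equality.''
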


Note that this result reduces to \eqref{std.cheeger} in the case $\calV_0 = \emptyset$.
The key step in the proof is the following estimate.
\begin{lemma}\label{fphi.lemma}
Let $f$ be a piecewise $C^1$ function on $\Gamma$, and for $\phi:\Gamma \to \bbR$ continuous, 
suppose that
\begin{equation}\label{fperp}
\int_\Gamma f \phi^2 = 0.
\end{equation}
Then 
\[
\norm{f'\phi} \ge \frac12 h_{\phi}(\Gamma)\> \norm{f\phi}.
\]
\end{lemma}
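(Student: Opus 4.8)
The plan is to run the classical Cheeger argument in the weighted setting, via the one‑dimensional coarea formula on $\Gamma$ combined with a median shift. Write $d\mu = \phi^2\,dx$, a finite measure on $\Gamma$; we may assume $\phi\not\equiv 0$, $h_\phi(\Gamma)>0$ and $\norm{f\phi}>0$, since otherwise there is nothing to prove. First I would reduce to nonnegative functions with small support: choose a $\mu$‑median $t_0$ of $f$, i.e.\ a value with $\mu(\set{f>t_0})\le\tfrac12\mu(\Gamma)$ and $\mu(\set{f<t_0})\le\tfrac12\mu(\Gamma)$, and set $g_1:=(f-t_0)_+$, $g_2:=(t_0-f)_+$. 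These are nonnegative and piecewise $C^1$, satisfy $f-t_0=g_1-g_2$ with essentially disjoint supports so that $|f'|^2=|g_1'|^2+|g_2'|^2$ a.e., and obey $\mu(\set{g_i>0})\le\tfrac12\mu(\Gamma)$ for $i=1,2$.

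The core estimate is the following: if $g\ge 0$ is piecewise $C^1$ with $\mu(\set{g>0})\le\tfrac12\mu(\Gamma)$, then $\norm{g'\phi}\ge\tfrac12 h_\phi(\Gamma)\norm{g\phi}$. To prove it, apply the one‑dimensional coarea (Banach indicatrix) formula edge by edge to $g^2$ with weight $\phi^2$:
\[
\int_\Gamma 2g\,|g'|\,\phi^2\,dx=\int_\Gamma \bigl|(g^2)'\bigr|\,\phi^2\,dx=\int_0^\infty\Bigl(\,\sum_{q\,:\,g(q)^2=t}\phi(q)^2\Bigr)\,dt .
\]
For a.e.\ $t>0$ the level set $S_t:=\set{g=\sqrt t}$ is finite — this is the only place the piecewise‑$C^1$ hypothesis is used, through the elementary one‑dimensional Sard lemma together with the finiteness of $\calV$ — hence $S_t$ is an admissible cutting set with pieces $Y_1=\set{g>\sqrt t}$, $Y_2=\set{g<\sqrt t}$, both open and nonempty ($Y_2\supseteq\set{g=0}$, which has $\mu$‑measure $\ge\tfrac12\mu(\Gamma)>0$). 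Since $\mu(Y_1)\le\mu(\set{g>0})\le\tfrac12\mu(\Gamma)\le\mu(Y_2)$, the definition \eqref{wcheeger.def} gives $\sum_{q\in S_t}\phi(q)^2\ge h_\phi(\Gamma)\,\mu(Y_1)$. Integrating in $t$ and using the layer‑cake identity $\int_0^\infty\mu(\set{g>\sqrt t})\,dt=\int_\Gamma g^2\phi^2\,dx=\norm{g\phi}^2$, we obtain $\int_\Gamma 2g|g'|\phi^2\,dx\ge h_\phi(\Gamma)\norm{g\phi}^2$; by Cauchy--Schwarz the left side is at most $2\norm{g\phi}\,\norm{g'\phi}$, and dividing by $2\norm{g\phi}$ yields the core estimate.

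Finally I would recombine. Applying the core estimate to $g_1$ and $g_2$ and summing the squared inequalities,
\[
\norm{f'\phi}^2=\norm{g_1'\phi}^2+\norm{g_2'\phi}^2\ge\tfrac14 h_\phi(\Gamma)^2\bigl(\norm{g_1\phi}^2+\norm{g_2\phi}^2\bigr)=\tfrac14 h_\phi(\Gamma)^2\,\norm{(f-t_0)\phi}^2 .
\]
Expanding and using the hypothesis \eqref{fperp}, $\norm{(f-t_0)\phi}^2=\norm{f\phi}^2-2t_0\int_\Gamma f\phi^2\,dx+t_0^2\norm{\phi}^2=\norm{f\phi}^2+t_0^2\norm{\phi}^2\ge\norm{f\phi}^2$, which gives the lemma. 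The one genuinely non‑mechanical step is the median shift: it is what forces both $g_1$ and $g_2$ to live on sets of at most half the $\mu$‑mass, so that the minimum in \eqref{wcheeger.def} is always realized by the ``upper'' piece $Y_1$, while the price paid, the term $t_0^2\norm{\phi}^2$, is exactly what \eqref{fperp} lets us discard. The remaining ingredients — the piecewise coarea formula and the a.e.\ admissibility of level sets as cutting sets — are routine.
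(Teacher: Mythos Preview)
Your proof is correct and follows essentially the same route as the paper's: a median shift $t_0$ (the paper's $k$), the coarea formula applied to the squared positive and negative parts of $f-t_0$ with weight $\phi^2$, the Cheeger bound on each level set, Cauchy--Schwarz, and finally the use of \eqref{fperp} to drop the $t_0^2\norm{\phi}^2$ term. The only cosmetic difference is that the paper sums the two coarea inequalities first and applies Cauchy--Schwarz once, whereas you apply Cauchy--Schwarz to each piece separately and then add the squares; both give the identical constant $\tfrac12 h_\phi(\Gamma)$.
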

\begin{proof}
Set
\[
k := \sup \set*{t: \int_{\set{f\le t}} \phi^2 \le \tfrac12 \int_\Gamma \phi^2},
\]
and define
\[
g_+ = (f-k)^2 \chi_{\set{f >k}}.
\]
The level set $\set{g_+ = t}$ is finite for almost every $t > 0$, and the co-area formula gives
\begin{equation}\label{g.coarea}
\int_\Gamma \abs{g_+'} \phi^2 = \int_0^\infty \paren[\bigg]{\sum_{\set{g_+ = t}} \phi^2} dt.
\end{equation}
Note also that, by the choice of $k$,
\[
\int_{\set{g_+ > t}} \phi^2 \le \int_{\set{g_+ < t}} \phi^2
\]
for all $t >0$. Thus, for $t$ such that $\set{g_+ = t}$ is finite,
\[
\sum_{\set{g_+ = t}} \phi^2 \ge h_\phi(\Gamma) \int_{\set{g_+ > t}} \phi^2.
\]
Plugging this back into \eqref{g.coarea} gives
\begin{equation}\label{gplus.bd}
\begin{split}
\int_\Gamma \abs{g_+'} \phi^2 &\ge h_\phi(\Gamma) \int_0^\infty  \paren[\bigg]{ \int_{\set{g_+ > t}} \phi^2} dt \\
&= h_\phi(\Gamma) \int_\Gamma g_+ \phi^2.
\end{split}
\end{equation}

Similarly, for $g_- := (f-k)^2 \chi_{\set{f < k}}$ we obtain
\begin{equation}\label{gminus.bd}
\int_\Gamma \abs{g_-'} \phi^2 \ge  h_\phi(\Gamma) \int_\Gamma g_- \phi^2.
\end{equation}
Since $(f-k)^2 = g_+ + g_-$, adding \eqref{gplus.bd} and \eqref{gminus.bd} gives
\begin{equation}\label{gpm}
\int_\Gamma \paren*{\abs{g_+'} + \abs{g_-'}}\phi^2 \ge  h_\phi(\Gamma) \int_\Gamma (f-k)^2 \phi^2.
\end{equation}
After computing
\[
\abs{g_+'} + \abs{g_-'} = 2\abs{f-k}\,\abs{f'},
\]
we can apply Cauchy-Schwarz to \eqref{gpm} to obtain 
\[
\begin{split}
h_\phi(\Gamma) \>\norm{(f-k) \phi}^2 &\le 2\int_\Gamma \abs{f-k}\,\abs{f'}\phi^2 \\
&\le 2\> \norm{(f-k) \phi} \>\norm{f'\phi}.
\end{split}
\]
Hence 
\[
\norm{f'\phi} \ge \frac12 h_\phi(\Gamma) \>\norm{(f-k) \phi}.
\]
The final step is to note that the hypothesis \eqref{fperp} implies that
\[
\norm{(f-k) \phi}^2 = \norm{f\phi}^2 + k^2 \norm{\phi}^2 \ge \norm{f\phi}^2.
\]
\end{proof}

To deduce the weighted Cheeger bound from  Lemma~\ref{fphi.lemma}, we
call upon a variational principle from the folklore. In our notation,
\begin{equation}\label{folklore}
\lambda_2 - \lambda_1 = \inf \set*{\frac{\norm{f'\phi_1}^2}{\norm{f\phi_1}^2}:\> \int_\Gamma f \phi_1^2 = 0},
\end{equation}
for $f$ a piecewise $C^1$ function on $\Gamma$, with the minimum attained at $f = \phi_2/\phi_1$.
This formula is quite general and has been proven independently in a variety of contexts.
The earliest reference that we are aware of is Thompson-Kac \cite[eq.~(3.10)]{TK:1969}.
The short proof for metric graphs is included in the argument below.

\begin{proof}[Proof of Theorem~\ref{hphi.thm}]
Let $f = \phi_2/\phi_1$, which is smooth away from the vertices and satisfies
\[
\int_\Gamma f \phi_1^2 = \brak{\phi_2,\phi_1}  = 0.
\]
Assuming the eigenfunctions are normalized,
\begin{equation}\label{l2.fphi}
\begin{split}
\lambda_2 &= \norm{\phi_2'}^2 \\
&= \norm*{f'\phi_1 + f\phi_1'}^2 \\
&= \norm{f'\phi_1}^2 + \norm{f\phi_1'}^2 + \frac12 \int_\Gamma (f^2)' (\phi_1^2)'.
\end{split}
\end{equation}
Since both $f^2$ and $\phi_1^2$ satisfy the vertex conditions, we can integrate by parts to
obtain
\[
\begin{split}
\frac12 \int_\Gamma (f^2)' (\phi_1^2)' = - \frac12 \int_\Gamma f^2 \Delta(\phi_1^2) \\
= \int_\Gamma \paren*{\lambda_1 \phi_2^2 - f^2 (\phi_1')^2} \\
= \lambda_1 - \norm{f\phi_1'}^2.
\end{split}
\]
This simplifies \eqref{l2.fphi} to 
\begin{equation}\label{l21.fpp}
\lambda_2 - \lambda_1 = \norm{f'\phi_1}^2,
\end{equation}
which is the minimum case of \eqref{folklore}.
The result then follows from Lemma~\ref{fphi.lemma}, since
$f\phi_1 = \phi_2$.
\end{proof}

\section{Estimates of the first eigenfunction}\label{phi1.sec}

To make use of Theorem~\ref{hphi.thm} in the case $\calV_0 \ne \emptyset$, 
we need some control over the range of the first eigenfunction $\phi_1$.
In particular, we will establish a lower bound on a subset that excludes the Dirichlet vertices.

For each vertex $v_j \in \calV_0$, parametrize the edge incident to $v_j$ by $x_j \in [0,\ell_j]$, with $x_j = 0$
at $v_j$.  Within this edge define the interval,
\begin{equation}\label{Ij.def}
I_j := \sup \set*{x_j \in [0,\ell_j): \phi_1'(x_j) > 0},
\end{equation}
which includes the full interior of the edge unless $\phi_1$ has a local maximum.  Then let
\begin{equation}\label{gamma1.def}
\Gamma_1 := \Gamma \backslash (\cup_{v_j \in \calV_0} I_j).
\end{equation}
If $\phi_1$ has no local maxima in external edges, then $\Gamma_1$ is the subgraph obtained by 
trimming from $\Gamma$ all edges incident on $\calV_0$, as shown in Figure~\ref{gamma1.fig}.
If $\phi_1$ does have local maxima within an outer edge, then we would need to first add artificial 
vertices at these maxima before trimming.

\begin{figure}
\begin{overpic}[scale=.6]{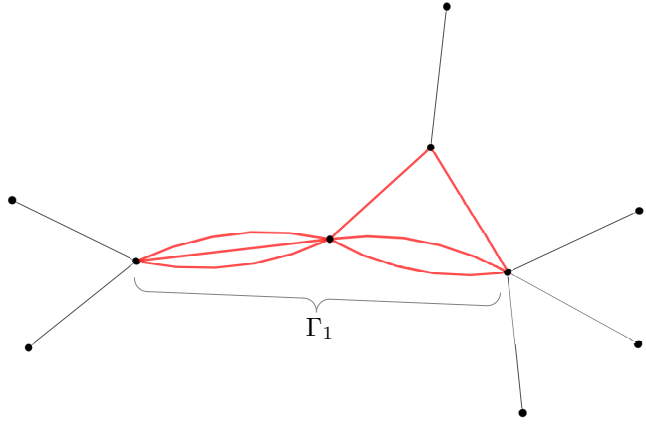}
\put(47,13){$\Gamma_1$}
\end{overpic}
\caption{The subgraph $\Gamma_1$ created by trimming external edges.}\label{gamma1.fig}
\end{figure}

Our goal in this section is to prove an explicit Harnack inequality for the restriction of 
$\phi_1$ to $\Gamma_1$. A general Harnack inequality for eigenfunctions on quantum graphs was
proven in Harrell-Maltsev \cite[Thm.~2.2]{HM:2020}.
The difference here is that we will produce a constant that depends only on $L$ and $\ell_0$.

By construction, the maximum value of the first eigenfunction,
\begin{equation}\label{m1.def}
M_1 := \max_{\Gamma} \phi_1,
\end{equation}
occurs at a point in $\Gamma_1$, and the minimum over $\Gamma_1$,
\begin{equation}\label{m1.def}
m_1 := \min_{\Gamma_1} \phi_1,
\end{equation}
is strictly positive. 
Note that it is possible for $\Gamma_1$ to consist of a single point. 
In this case, $\Gamma$ has $k = \#\calV_0$ edges, each 
connecting $v$ to a Dirichlet vertex. The continuity condition at $v$ implies that all of
these edges have equal length. 
Hence $\Gamma$ is a half-Dirichlet interval if $k=1$, a full Dirichlet interval for $k=2$,
and an equilateral star graph for $k \ge 3$.  

If $\Gamma_1$ contains more than one point, then $m_1$
occurs at a vertex $v \in \calV_\std$, by the concavity of $\phi_1$ on edges. 
The standard vertex condition implies that the outgoing derivative of $\phi_1$ at $v$ is $\le 0$ 
on at least one incident edge $e$ in $\Gamma$. 
By concavity, $\phi_1$ is strictly decreasing in the interior of $e$, and so $e$ must lie outside $\Gamma_1$. 
Therefore, the minimum $m_1$ is achieved at a vertex in $\calV_\std$ which is adjacent to a vertex in 
$\calV_0$. 

The fact that $\phi_1$ is strictly decreasing on an edge incident on a Dirchlet vertex implies that 
$\sigma \ell \le \pi/2$, where $\ell$ is the length of this edge and $\sigma := \sqrt{\lambda_1}$. 
This gives an upper bound
\begin{equation}\label{lam1.ell0.bd}
\lambda_1 \le \frac{\pi^2}{4\ell_0^2},
\end{equation}
provided $\Gamma_1$ contains more than one point. For the exceptional cases where $\Gamma_1$ contains a single point,
we see explicitly that $\lambda_1 = \pi^2/\ell_0^2$ for the full Dirichlet interval and $\lambda_1 = \pi^2/4\ell_0^2$ for all other cases.
Hence the Dirichlet interval is the only exception to the bound \eqref{lam1.ell0.bd}.

The estimate \eqref{lam1.ell0.bd} is implied by the girth estimate \eqref{girth.est} if $\calV_0$ contains more than one vertex, or if $\Gamma$ 
contains a cycle with at least two edges.  But it does cover a few additional cases, such as a tree or tadpole graph 
with a single Dirichlet vertex.

\begin{proposition}\label{phi1.min.prop}
Assuming that $\Gamma_1$ contains more than one point,
there exists a constant $c(L,\ell_0)>0$, depending only on $L = \abs{\Gamma}$ and $\ell_0$ the minimum edge length,
such that
\begin{equation}\label{phi.min.bd}
\frac{m_1}{M_1} \ge c(L,\ell_0).
\end{equation}
\end{proposition}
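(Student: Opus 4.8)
The plan is to propagate a pointwise lower bound on $\phi_1/M_1$ edge-by-edge through the graph $\Gamma_1$, starting from the edge on which the maximum $M_1$ is attained and exploiting the fact that $\phi_1$ is concave on every edge with second derivative $-\lambda_1 \phi_1$, where $\lambda_1 \le \pi^2/(4\ell_0^2)$ by \eqref{lam1.ell0.bd}. On a single edge $e$ of length $\ell \ge \ell_0$, parametrized by arclength, $\phi_1$ solves $-\phi_1'' = \lambda_1\phi_1$ with $\phi_1 \ge m_1 > 0$ throughout; writing $\sigma = \sqrt{\lambda_1} \le \pi/(2\ell_0)$, the general solution is $\phi_1(x) = A\cos(\sigma x) + B\sin(\sigma x)$ with $A,B$ constrained so that $\phi_1$ stays positive on $[0,\ell]$. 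The key elementary fact I would establish is a \emph{one-edge Harnack estimate}: there is a constant $\kappa(\ell_0) \in (0,1]$ (which one can take, say, as $\cos(\sigma\ell_0 \cdot \text{something}) $ after a short computation, or more robustly via compactness) such that
\[
\min_{x \in e} \phi_1(x) \ge \kappa(\ell_0) \max_{x\in e}\phi_1(x),
\]
\emph{or} $\phi_1$ has an interior maximum on $e$. Indeed, if $\phi_1$ is monotone on $e$ then its min and max occur at the two endpoints, and the ratio of endpoint values of a positive solution of $-u''=\sigma^2 u$ on an interval of length $\le \ell \le L$ with $\sigma \le \pi/(2\ell_0)$ is bounded below in terms of $\sigma\ell \le \pi L/(2\ell_0)$; if $\phi_1$ has an interior maximum on $e$, that point was already added as an artificial vertex in the construction of $\Gamma_1$, so this case simply splits $e$ into two monotone pieces.

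Next I would run a path argument. Let $p \in \Gamma_1$ be a point where $\phi_1(p) = M_1$. Given any vertex $v \in \Gamma_1$, choose a path in $\Gamma_1$ from $p$ to $v$; since $\Gamma$ is connected and has minimum edge length $\ell_0$ and total length $L$, such a path can be chosen to traverse at most $N := \lfloor L/\ell_0\rfloor$ edges (a simple path repeats no edge, so its length is at most $L$). Applying the one-edge estimate successively along this path — comparing the value at one endpoint of an edge to the max on that edge, then the max on the edge to the value at the other endpoint — gives
\[
\phi_1(v) \ge \kappa(\ell_0)^{2N} M_1.
\]
Taking the minimum over all vertices $v \in \Gamma_1$, and recalling that $m_1$ is attained at a vertex of $\calV_\std$ (as established in the discussion preceding the Proposition), yields \eqref{phi.min.bd} with $c(L,\ell_0) := \kappa(\ell_0)^{2\lfloor L/\ell_0\rfloor}$.

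The main obstacle, and the step deserving the most care, is making the one-edge estimate $\kappa(\ell_0)$ genuinely \emph{uniform} and genuinely \emph{positive}: a positive solution of $-u'' = \sigma^2 u$ can come close to vanishing at an endpoint (as in Example~\ref{star4.ex} on the short edges of $\phi_2$), so the bound must use that on edges \emph{inside} $\Gamma_1$ the function $\phi_1$ is bounded below by $m_1 > 0$ and that $\sigma\ell$ is controlled. Concretely, on an edge $[0,\ell]$ with $\phi_1$ monotone increasing, concavity gives $\phi_1(\ell) \ge \phi_1(x) \ge \phi_1(0) + \frac{x}{\ell}(\phi_1(\ell)-\phi_1(0))$ is false in general (concave lies above the chord), so instead one uses $\phi_1(x) \ge \phi_1(0)$ directly together with the fact that a concave positive function that increases from $\phi_1(0)$ cannot grow by more than a factor depending on $\sigma\ell$: since $\phi_1(x) = \phi_1(0)\cos\sigma x + (\phi_1'(0)/\sigma)\sin\sigma x$ and $\phi_1'(0) \ge 0$, one has $\phi_1(\ell) \le \phi_1(0) + \phi_1'(0)\ell$, while $\phi_1'$ is decreasing so $\phi_1'(0) \le$ (average slope controlled by the requirement that $\phi_1$ stays positive on the rest of its domain, up to length $L$). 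Turning this heuristic into a clean constant $\kappa(\ell_0)$ — for instance by the substitution $t = \sigma x$, reducing to positive solutions of $-u''=u$ on an interval of length $\le \sigma L \le \pi L/(2\ell_0)$ and invoking a compactness/normal-families argument on that fixed family — is the technical heart of the proof; everything else is bookkeeping along the path.
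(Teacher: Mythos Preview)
Your high-level plan --- chain a per-edge comparison along a path of at most $\lfloor L/\ell_0\rfloor$ edges joining the max and min of $\phi_1$ --- is exactly the architecture of the paper's proof.  The gap is in the step you yourself flag as ``the technical heart'': the one-edge Harnack estimate $\min_e\phi_1 \ge \kappa(\ell_0)\max_e\phi_1$, with $\kappa$ depending only on $\ell_0$, is \emph{false}, and your proposal does not supply a correct substitute.

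To see the failure, look at Example~\ref{star15.ex}.  There $\Gamma_1$ is a single edge (the segment $[\pi/2\sigma,2]$ on the long arm), $\ell_0=1$ is fixed, and yet
\[
\frac{m_1}{M_1}=\sin(2\sigma)=\frac{\sqrt{2k+1}}{k}\xrightarrow[k\to\infty]{}0.
\]
So the per-edge ratio cannot be bounded below by any $\kappa(\ell_0)>0$; it necessarily depends on $L$ (equivalently, on the degree of the adjacent vertex).  Your sketch of a remedy --- ``$\phi_1'(0)\le$ (average slope controlled by the requirement that $\phi_1$ stays positive on the rest of its domain)'' --- is precisely the place where something real has to happen, and positivity of $\phi_1$ on the edge $e$ alone gives nothing: a solution $A\sin(\sigma x+\delta)$ with $\delta\to 0^+$ is positive on $e$ but has endpoint ratio $\sin\delta\to 0$.

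What the paper does at this juncture is bring in the one ingredient your proposal never mentions: the standard (Kirchhoff) vertex condition.  At the vertex $v_0$ where the minimal phase $\delta_0$ is realized, the outgoing derivative into $e_j$ is $A\sigma\cos\delta_0$; the Kirchhoff condition forces some other incident edge $e_k$ to carry a comparable derivative, and since $\phi_1$ on $e_k$ is a positive sine arc over length $\ge\ell_0$, one gets
\[
\tan\delta_0 \;\ge\; \frac{1}{d_0-1}\,\tan(\sigma\ell_k)\;\ge\;\frac{1}{d_0-1}\,\tan(\sigma\ell_0),
\]
with $d_0\le L/\ell_0$ and $\sigma\ge\pi/(2L)$.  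This is where the dependence on $L$ enters, and it is the only way the local edge estimate becomes quantitative.  Your proposal could be repaired by inserting exactly this argument, but as written it does not identify the mechanism; the compactness/normal-families suggestion is not wrong in principle (finitely many combinatorial types, edge lengths in a compact set), but it is not carried out and would in any case be a detour compared to the direct vertex-condition computation.
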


\begin{proof}
Let $\lambda_1= \sigma^2$ for $\sigma>0$. On each edge $e_j$ of $\Gamma_1$, $\phi_1$ is given by a 
positive arc of the sine function. Thus, we can choose phases $\alpha_j,\beta_j \in (0,\pi)$
such that
\begin{equation}\label{phi1.ej}
\phi_1|_{e_j}(x) = A_j \sin (\sigma x),
\end{equation}
for a parametrization of $e_j$ by 
\[
x \in \sqbrak*{\frac{\alpha_j}{\sigma},\frac{(\pi-\beta_j)}{\sigma}}.
\]
These phases are illustrated in Figure~\ref{phases.fig}.
By switching the orientation if necessary, we can assume that $\alpha_j \le \beta_j$. 

\begin{figure}
\begin{overpic}[scale=.5]{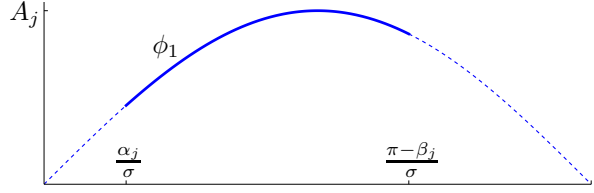}
\put(-6,30){$A_j$}
\put(13,3){$\tfrac{\alpha_j}{\sigma}$}
\put(62,3){$\tfrac{\pi - \beta_j}{\sigma}$}
\put(20,24){$\phi_1$}
\end{overpic}
\caption{The parametrization $\eqref{phi1.ej}$ with phases $\alpha_j$, $\beta_j$.}\label{phases.fig}
\end{figure}

The change in $\log \phi_1$ across $e_j$ is given by
\[
\log \phi_1\big|_{\alpha_j/\sigma}^{(\pi-\beta_j)/\sigma} = \log \frac{\sin \beta_j}{\sin\alpha_j}.
\]
If we define
\[
\delta_0 := \min_{e_j \subset \Gamma_1} \alpha_j,
\]
then for each edge $e_j$ in $\Gamma_j$,
\begin{equation}\label{delta0.est}
\abs*{\log \phi_1\big|_{\alpha_j/\sigma}^{(\pi-\beta_j)/\sigma}} \le - \log \sin \delta_0.
\end{equation}
Note that $\delta_0 < \pi/2$, since $\alpha_j + \beta_j< \pi$ on each edge.

Suppose that the vertices of $\Gamma_1$ where the minimum and maximum
of $\phi_1$ occur can be joined by a path with at most $q$ edges. The estimate \eqref{delta0.est} then
gives
\begin{equation}\label{log.minmax}
\log \frac{M_1}{m_1} \le - q \log \sin \delta_0,
\end{equation}
and the problem is now reduced to finding a lower bound for $\delta_0$.

Suppose that the minimal phase $\delta_0$ occurs at a vertex $v_0$ of $\Gamma_1$ which is 
the $x=0$ endpoint of an edge $e_j$ parametrized by $x \in [0,\ell_j]$.  In this parametrization,
\[
\phi_1(x) = A \sin (\sigma x + \delta_0).
\]
Because $\delta_0 < \pi/2$, $\phi_1$ does not have a local maximum at $v_0$, which implies that
$v_0 \in \calV_\std$.

The outward derivative into $e_j$ from $v_0$ is given by
\[
\phi_1'(0^+) = A \cos \delta_0.
\]
Let $e_k$ be the edge of $\Gamma$ incident to $v_0$ for which the inward-pointing derivative at $v_0$ is maximal. 
We can parametrize $e_k$ by $x \in [-\ell_k,0]$, and then continue the eigenfunction as
\begin{equation}\label{phi.B}
\phi_1(x) = B \sin (\alpha_k - \sigma x),\quad\text{for }x \le 0,
\end{equation}
for some phase $\alpha_k \in (0,\pi)$.  By continuity, the amplitudes satisfy
\begin{equation}\label{AB.cont}
A \sin \delta_0 = B \sin \alpha_k.
\end{equation}
Because the vertex condition at $v_0$ is standard, and by the choice of $e_k$,
we can estimate
\[
\phi_1'(0^+) \le (d_0-1) \phi_1'(0^-),
\]
where $d_0$ is the degree of $v_0$.  This gives
\[
A \cos \delta_0 \le (d_0-1) B\cos \alpha_k.
\]
Combining this with \eqref{AB.cont} yields a lower bound
\begin{equation}\label{tan.d0}
\tan \delta_0 \ge \frac{1}{d-1} \tan \alpha_k.
\end{equation}

To obtain a lower bound on the phase $\alpha_k$ from \eqref{phi.B}, note that since $\phi_1$ cannot vanish in 
the interior of $e_k$, $\alpha_k \ge \sigma\ell_k$. From \eqref{tan.d0}, we thus obtain
\begin{equation}\label{b.def}
\tan \delta_0 \ge \frac{1}{d_0 - 1} \tan\paren*{\sigma \ell_k}.
\end{equation}
If we denote the right side of \eqref{b.def} by $b$, then this gives
\[
\sin \delta_0 \ge \frac{b}{\sqrt{1+b^2}},
\]
and \eqref{log.minmax} implies that
\begin{equation}\label{min.phi.ratio}
m_1 \ge \paren*{\frac{b}{\sqrt{1+b^2}}}^{\!q} M_1.
\end{equation}
To complete the proof, note that $\ell_k \ge \ell_0$, both $q$ and $d_0$ are bounded by $L/\ell_0$ 
and $\sigma$ was bounded below by $\pi/2L$ in \eqref{lambda1.lower}.
\end{proof}

\begin{figure}
\begin{overpic}[scale=.5]{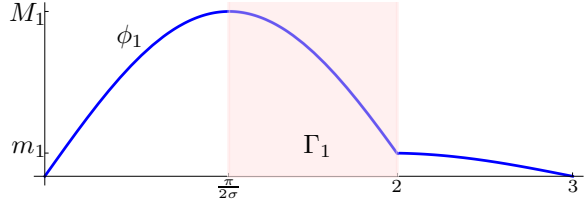}
\put(-4,6){$m_1$}
\put(-4.5,30){$M_1$}
\put(33,-1){$\scriptstyle \frac{\pi}{2\sigma}$}
\put(65,-1){$\scriptstyle 2$}
\put(97,-1){$\scriptstyle 3$}
\put(49,6){$\Gamma_1$}
\put(15,26){$\phi_1$}
\end{overpic}
\caption{Bounds on the first eigenfunction for a star graph as in Example~\ref{star15.ex}, with $k=100$.}\label{phi1plot.fig}
\end{figure}

Example~\ref{star15.ex} gives an interesting test case for the sharpness of the eigenfunction estimate.
The first eigenfunction is given by \eqref{star15.phi} with $\sigma = \arctan \sqrt{2k+1}$.
In this normalization $\max \phi_1 = 1$, which occurs at $x = \pi/2\sigma$.
The subgraph $\Gamma_1$ consists of the segment $[\pi/2\sigma, 2]$, and the minimum of 
$\phi_1$ on this segment occurs at $x=2$.  Thus
\[
\begin{split}
\frac{m_1}{M_1} &= \sin (2\sigma) \\
&= \frac{\sqrt{2k+1}}{k}.
\end{split}
\]
For this graph, the right side of \eqref{b.def} is
\[
\frac{1}{k} \tan \sigma = \frac{\sqrt{2k+1}}{k},
\]
and $q=1$. Hence, the lower bound \eqref{min.phi.ratio} is 
asymptotically sharp as $k \to \infty$ for this case.

We can extend the bound \eqref{phi.min.bd} beyond $\Gamma_1$.  On each interval $I_j = [0,\ell_j)$ from \eqref{Ij.def}, the
eigenfunction takes the form $\phi_1(x_j) = A_j \sin (\sigma x_j)$.  The points $x_j = \ell_j$ lie in $\Gamma_1$, so 
\[
A_j \sin (\sigma \ell_j) \ge m_1.
\]
Therefore, 
\begin{equation}\label{outer.bound}
\phi_1(x_j) \ge \frac{m_1}{\sin (\sigma \ell_j)} \sin (\sigma x_j)
\end{equation}
on each interval $I_j$. 

To prove Theorem~\ref{envelope.thm}, we first note that in the notation used above, 
the variable $x_j$ used to parametrize $I_j$ is equal to $\dist(\cdot,\calV_0)$ for $x_j \le \ell_0/2$.
The envelope function $\Upsilon$ defined in \eqref{upsilon.def} is equal to $1$ on $\Gamma_1$ and 
on the outer edges satisfies 
\begin{equation}\label{ups.ij}
\Upsilon|_{I_j}(x_j) = \begin{cases} 1, & x_j \ge \ell_0/2, \\
\sin(\tfrac{\pi}{\ell_0} x_j), & x_j < \ell_0/2. \end{cases}
\end{equation}
\begin{proof}[Proof of Theorem~\ref{envelope.thm}]
We can assume that $\Gamma_1$ consists of more than one point, since $\phi_1$ is easily computed 
explicitly in the exceptional cases.

From \eqref{outer.bound}, \eqref{ups.ij}, and the fact that $\sigma > \pi/2L$, we can immediately deduce the inequality,
\begin{equation}\label{phi1.ups.lower}
\phi_1 \ge m_1 \sin(\pi \ell_0/4L) \Upsilon,
\end{equation}
on all of $\Gamma$. On the other hand, since $\phi_1$ has the form
$A_j \sin (\sigma x_j)$ on the outer edges and $\sigma \le \pi/\ell_0$, it follows that
\begin{equation}\label{phi1.ups.upper}
\phi_1 \le M_1 \Upsilon.
\end{equation}

The constant $M_1$ satisfies the trivial inequality
\begin{equation}\label{M1.lower}
M_1 \ge \sqrt{\frac{2}{L}}.
\end{equation}
To complete the argument, we need to estimate $M_1$ from above. 
Suppose that the maximum value of $\phi_1$ is achieved at a point 
$q_1 \in \Gamma_1$. By the concavity of $\phi_1$ and the
vertex conditions, there exists a segment of $\Gamma$, parametrized by $y \in [0,\pi/2\sigma]$ with $y=0$ at $q_1$,
on which
\[
\phi_1(y) \ge M_1\cos(\sigma x).
\]
Integrating $\phi_1^2$ over this segment gives the inequality
\[
\pi M_1^2/4\sigma \le 1.
\]
Since $\sigma \le \pi/2\ell_0$ by \eqref{lam1.ell0.bd}, this proves
\begin{equation}\label{M1.upper}
M_1 \le \sqrt{\frac{2}{\ell_0}},
\end{equation}

Using the inequalities \eqref{phi1.ups.lower}, \eqref{phi1.ups.upper}, \eqref{M1.lower}, and
\eqref{M1.upper}, Theorem~\ref{envelope.thm} now follows from Proposition~\ref{phi1.min.prop}.
\end{proof}

\section{Estimation of the weighted Cheeger constant}\label{lowerbd.sec}

To complete the proof of Theorem~\ref{wcheeger.thm}, we will use Proposition~\ref{phi1.min.prop}
to estimate $h_{\phi_1}(\Gamma)$.  We will assume $\calV_0 \ne \emptyset$ throughout this discussion, 
since the bound \eqref{std.lower} already already covers the case of standard vertex conditions.

As we noted at the beginning of \S\ref{phi1.sec}, if $\Gamma_1$ consists of a single point $v$, 
then $\Gamma$ has $k$ equilateral edges connecting $v$ to Dirichlet vertices.
If $a$ denotes the edge length, then in all cases
\begin{equation}\label{gamma1.pt.eigv}
\lambda_1 = \frac{\pi^2}{4a^2},\quad \lambda_2 = \frac{\pi^2}{a^2}.
\end{equation}
Since $\ell_0 = 2a$ for $k=2$ and $\ell_0 = a$ for $k \ne 2$, the conclusion of 
Theorem~\ref{wcheeger.thm} holds trivially for these cases.

In the general case, our goal is to estimate $h_{\phi_1}(\Gamma)$ in terms of the unweighted 
Cheeger constant $h(\Gamma)$.  Since the latter satisfies the trivial bound $h(\Gamma) \ge 2/L$, 
this will complete the proof.

If $\Gamma$ is not an interval (and $\calV_0 \ne \emptyset$), 
then its Cheeger constant also satisfies a trivial upper bound,
\begin{equation}\label{h.ell0}
h(\Gamma) \le \frac{1}{\ell_0}.
\end{equation}
To see this, take a cut $S$ given by a single point on an outer edge of $\Gamma$, such that $Y_1$
is a segment of length $\ell_0$.

We can make a similar estimate for $h_{\phi_1}(\Gamma)$, in terms of the function
\begin{equation}\label{f.def}
f(x) := \frac{\sin^2 (\sigma x)}{\int_0^x \sin^2 (\sigma t) dt} 
= \frac{2\sin^2 (\sigma x)}{x - \frac{1}{2\sigma} \sin (2\sigma x)},
\end{equation}
defined for $x \in (0,\pi/\sigma)$.  It is easy to check that $f$ is decreasing on this interval.
\begin{lemma}\label{hphi.fell0.lemma}
If $\Gamma_1$ contains more than one point, then
\begin{equation}\label{hphi.fell0}
h_{\phi_1}(\Gamma) \le f(\ell_0),
\end{equation}
\end{lemma}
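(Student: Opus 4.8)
The plan is to imitate the proof of the unweighted bound $h(\Gamma)\le 1/\ell_0$ recorded in \eqref{h.ell0}: there one amputates a segment of length $\ell_0$ from a pendant edge and counts points versus length; here we amputate an \emph{entire} pendant edge incident to a Dirichlet vertex and evaluate the resulting weighted Cheeger ratio explicitly, so that $f$ appears directly.

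First I would set up the cut. Since $\Gamma_1$ consists of more than one point, the discussion in \S\ref{phi1.sec} shows that $m_1=\min_{\Gamma_1}\phi_1$ is attained at a standard vertex $v$ joined, through a pendant edge $e$, to a Dirichlet vertex $v_0$, with $\phi_1$ decreasing monotonically from $m_1$ to $0$ along $e$; parametrizing $e$ by $x=\dist(\cdot,v_0)\in[0,\ell_e]$ we have $\phi_1|_e(x)=A\sin(\sigma x)$ with $\sigma\ell_e\le\pi/2$, where $\sigma=\sqrt{\lambda_1}$ (this is exactly the estimate behind \eqref{lam1.ell0.bd}). Take the Cheeger cut $S=\{v\}$, $Y_1=e\setminus\{v\}$ (the interior of $e$ together with $v_0$), $Y_2=\Gamma\setminus e$; this is admissible, since \eqref{wcheeger.def} does not require $Y_1,Y_2$ to be connected (so $\deg v\ge 3$ is harmless) and $\Gamma$ is not an interval, so $Y_2\ne\emptyset$. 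Using $\phi_1=A\sin(\sigma x)$ on $e$, the relation $A\sin(\sigma\ell_e)=\phi_1(v)=m_1$, and the definition \eqref{f.def},
\[
\frac{\sum_S\phi_1^2}{\int_{Y_1}\phi_1^2}=\frac{A^2\sin^2(\sigma\ell_e)}{A^2\int_0^{\ell_e}\sin^2(\sigma x)\,dx}=f(\ell_e)\le f(\ell_0),
\]
the last inequality because $f$ is decreasing and $\ell_e\ge\ell_0$.

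The only remaining point—and the crux—is to check that $Y_1$ is the lighter of the two pieces, $\int_{Y_1}\phi_1^2\le\int_{Y_2}\phi_1^2$, so that the minimum in \eqref{wcheeger.def} is realized on $Y_1$ and the displayed ratio genuinely bounds $h_{\phi_1}(\Gamma)$. Equivalently, the single pendant edge $e$ carries at most half the total $\phi_1^2$-mass. On the one hand, writing $g(u):=(u-\sin u\cos u)/\sin^2 u$ and using $\sigma\ell_e\le\pi/2$ together with the (elementary) monotonicity of $g$ on $(0,\pi/2]$, one gets the quarter-period bound
\[
\int_{Y_1}\phi_1^2=\int_0^{\ell_e}A^2\sin^2(\sigma x)\,dx=\frac{m_1^2}{2\sigma}\,g(\sigma\ell_e)\le\frac{\pi\,m_1^2}{4\sigma}.
\]
On the other hand, $\Gamma\setminus e$ carries at least $\pi m_1^2/4\sigma$ of $\phi_1^2$: the Kirchhoff condition at $v$ together with the hypothesis $\Gamma_1\ne\{v\}$ (which rules out $\Gamma$ being a star) forces $\phi_1$ to \emph{rise} from $m_1$ along some other edge $e''$ issuing from $v$, and along that arc—or, when $e''$ runs to a Dirichlet vertex, along $e''$ itself, which then spans more than a quarter period with amplitude $\ge m_1$—one estimates $\int_{e''}\phi_1^2\ge\pi m_1^2/4\sigma$; alternatively one uses the cosine-envelope segment of length $\pi/2\sigma$ emanating from the maximum point of $\phi_1$ in the proof of Theorem~\ref{envelope.thm}, on which $\phi_1\ge M_1\cos(\sigma\cdot)\ge m_1\cos(\sigma\cdot)$.

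The main obstacle is making this comparison airtight: the rising arc at $v$ (or the cosine-envelope segment) could be truncated by a short edge or could partially overlap $e$, so the two mass contributions need not be cleanly disjoint. I would handle this by a short case analysis. When $e''$ is itself a pendant Dirichlet edge the bound $\int_{e''}\phi_1^2>\pi m_1^2/4\sigma\ge\int_{Y_1}\phi_1^2$ is immediate, and $e''$ is disjoint from $e$, so we are done. In the remaining (borderline) configurations—precisely those in which $e$ would carry more than half the mass—one re-selects the amputated edge: if $v$ carries more than one pendant Dirichlet edge, amputate a \emph{shortest} one, $e'$, for which $\int_{e'}\phi_1^2=\tfrac{m_1^2}{2\sigma}g(\sigma\ell_{e'})\le\tfrac{m_1^2}{2\sigma}g(\sigma\ell_e)=\int_e\phi_1^2\le\int_{\Gamma\setminus e'}\phi_1^2$ by monotonicity of $g$, with ratio $f(\ell_{e'})\le f(\ell_0)$; otherwise one follows $\phi_1$ farther along $\Gamma_1$ (which is finite and must terminate at a local maximum or at another pendant Dirichlet edge) until the quarter-period mass $\pi m_1^2/4\sigma$ is accounted for in $\Gamma\setminus e$, disjointly from $e$. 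Assembling these cases gives a Cheeger cut whose weighted ratio is $f(\ell_e)\le f(\ell_0)$ with $Y_1$ the lighter side, hence $h_{\phi_1}(\Gamma)\le f(\ell_0)$.
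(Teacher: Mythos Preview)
Your approach of amputating the full pendant Dirichlet edge $e$ at the vertex $v$ where $m_1$ is attained computes the ratio $f(\ell_e)$ cleanly, but the verification that $Y_1=e$ is the lighter side has a genuine gap that your case analysis does not close. Consider the tadpole: a loop (a single edge with both endpoints at $v$) of length $1$, together with a pendant edge $e$ of length $3$ from $v$ to a single Dirichlet vertex. Here $\ell_0=1$, $\Gamma_1$ is the loop, and $m_1=\phi_1(v)$. From the vertex condition $2\tan(3\sigma)\tan(\sigma/2)=1$ one finds $\sigma\approx 0.396$, and then $\int_e\phi_1^2\approx 1.24\,m_1^2$ while $\int_{\text{loop}}\phi_1^2\approx 1.03\,m_1^2$, so $\int_{Y_1}>\int_{Y_2}$. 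None of your fallback cases applies: $v$ carries only the one Dirichlet pendant, there is no second rising Dirichlet pendant $e''$ to switch to, and ``following $\phi_1$ along $\Gamma_1$'' just traverses the loop, whose entire $\phi_1^2$-mass is already below your target $\pi m_1^2/(4\sigma)\approx 1.98\,m_1^2$. The claimed lower bound $\int_{Y_2}\phi_1^2\ge \pi m_1^2/(4\sigma)$ is simply false here.

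The paper avoids this by placing the cut at $x_j=\ell_0$ inside $I_j$ rather than at $v$, so $Y_1$ is only the short segment $[0,\ell_0)$ and the ratio equals $f(\ell_0)$ exactly (no appeal to the monotonicity of $f$ is needed). The lighter-side check then becomes elementary: if $\#\calV_0\ge 2$, pick the index $j$ minimizing $\int_0^{\ell_0}\phi_1^2$ among all $I_j$, which forces $Y_1$ to carry at most half the total mass; if $\#\calV_0=1$, the pendant edge carries $\phi_1$ monotonically from $0$ up to $m_1$, so $\phi_1\le m_1$ on $[0,\ell_0)$ and $\int_{Y_1}\phi_1^2\le m_1^2\ell_0$, while any interior edge (length $\ge\ell_0$, with $\phi_1\ge m_1$) already contributes at least $m_1^2\ell_0$ to $\int_{Y_2}$. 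In the tadpole this yields $\int_{Y_1}\phi_1^2\approx 0.06\,m_1^2$, and the argument goes through.
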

\begin{proof}
Taking a Cheeger cut $S$ consisting of a single point $x_j = \ell_0$ in one 
of the intervals $I_j$ from \eqref{Ij.def} gives the ratio
\begin{equation}\label{cheeger.ell0}
\frac{\sum_S \phi_1^2}{\int_{Y_1} \phi_1^2} = f(\ell_0),
\end{equation}
where $Y_1 = \set{0\le x_j < \ell_0}$ and $Y_2$ is the other component of $\Gamma\backslash S$. 
If $\calV_0$ contains more than one point, then we can choose $j$ so as to minimize $\int_{Y_1} \phi^2$ among
all $I_j$, guaranteeing that 
\begin{equation}\label{phi.y1y2}
\int_{Y_1} \phi_1^2 \le \int_{Y_2} \phi_1^2.
\end{equation}
On the other hand, if $\#\calV_0 = 1$, then, assuming that $\Gamma_1$ is not a single point, 
$\Gamma$ contains at least one interior edge
on which $\phi_1 \ge m_1$, implying that \eqref{phi.y1y2} holds also in this case.  From 
\eqref{cheeger.ell0} we thus obtain \eqref{hphi.fell0}.
\end{proof}

\begin{proposition}\label{hphi.prop}
If $\Gamma_1$ contains more than one point, then
\[
h_{\phi_1}(\Gamma) \ge \paren*{\frac{m_1 \sin(\sigma \ell_0/2)}{M_1}}^{\!2} h(\Gamma),
\]
where $\sigma = \sqrt{\lambda_1}$ and $M_1, m_1$ are the upper and lower bounds on $\phi_1$ from \S\ref{phi1.sec}.
\end{proposition}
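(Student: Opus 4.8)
The plan is to compare the weighted Cheeger ratio with the unweighted one cut-by-cut, and then take the infimum. Fix an arbitrary Cheeger cut $S$ with $\Gamma\backslash S = Y_1 \sqcup Y_2$. For the denominators, the envelope bounds from \S\ref{phi1.sec} give pointwise control on $\phi_1$. On the interior subgraph $\Gamma_1$ we have $m_1 \le \phi_1 \le M_1$; on each outer interval $I_j$ the lower bound \eqref{outer.bound} gives $\phi_1(x_j) \ge \frac{m_1}{\sin(\sigma\ell_j)}\sin(\sigma x_j)$. So on the whole graph $\phi_1 \ge m_1 \Psi$ for a suitable profile function $\Psi$ that equals $1$ on $\Gamma_1$ and is the rescaled sine arc on outer edges, while $\phi_1 \le M_1$ everywhere (since $M_1$ is the global max). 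Hence for any measurable set $Y$,
\[
\int_Y \phi_1^2 \ge m_1^2 \int_Y \Psi^2, \qquad \int_Y \phi_1^2 \le M_1^2 |Y|.
\]
The difficulty is that $\Psi$ degenerates near the Dirichlet vertices, so a naive bound $\int_Y \Psi^2 \ge (\text{const})|Y|$ fails; this is the main obstacle and the reason the statement involves $\sin(\sigma\ell_0/2)$ rather than something cruder.

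To handle this, I would argue that the component $Y_i$ realizing the minimum in the unweighted ratio can be modified so that its length is controlled. Concretely: given the cut $S$, compare $\frac{\sum_S \phi_1^2}{\min_i \int_{Y_i}\phi_1^2}$ to $\frac{\#S}{\min_i |Y_i|}$. For the numerator, each point $s \in S$ lies on some edge, and $\phi_1(s) \ge m_1 \sin(\sigma \ell_0/2)$: indeed, if $s$ lies in an outer interval $I_j$ at parameter $x_j$, then either $x_j \ge \ell_0/2$ and a short argument via \eqref{outer.bound} gives $\phi_1(s) \ge m_1$ (using that $\sin$ is increasing up to $\pi/2$ and $\sigma\ell_j \le \pi/2$... one must check the worst case, which is $x_j$ small), or $x_j < \ell_0/2$ and \eqref{outer.bound} gives $\phi_1(s)\ge \frac{m_1}{\sin(\sigma\ell_j)}\sin(\sigma x_j) \ge m_1 \sin(\sigma x_j)/\sin(\sigma\ell_j)$; the minimum over admissible configurations of this quantity, subject to $x_j < \ell_0/2 \le \ell_j/2$, is bounded below by $m_1\sin(\sigma\ell_0/2)$ after a monotonicity check. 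If $s$ lies in $\Gamma_1$, then $\phi_1(s)\ge m_1 \ge m_1\sin(\sigma\ell_0/2)$ trivially. So $\sum_S \phi_1^2 \ge \#S \cdot (m_1\sin(\sigma\ell_0/2))^2$.

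For the denominators, I would pick the labeling so that $|Y_1| \le |Y_2|$, hence $|Y_1| = \min_i|Y_i|$, and show $\int_{Y_1}\phi_1^2 \le M_1^2 |Y_1|$ directly from $\phi_1 \le M_1$; meanwhile $\int_{Y_2}\phi_1^2$ could be large, but since we are bounding $h_{\phi_1}$ from below we only need an upper bound for $\min_i \int_{Y_i}\phi_1^2$, and $\min_i \int_{Y_i}\phi_1^2 \le \int_{Y_1}\phi_1^2 \le M_1^2|Y_1| = M_1^2 \min_i|Y_i|$. Combining,
\[
\frac{\sum_S \phi_1^2}{\min_i \int_{Y_i}\phi_1^2} \ge \frac{\#S\,(m_1\sin(\sigma\ell_0/2))^2}{M_1^2 \min_i |Y_i|} = \paren*{\frac{m_1\sin(\sigma\ell_0/2)}{M_1}}^2 \frac{\#S}{\min_i|Y_i|}.
\]
Taking the infimum over all Cheeger cuts $S$ yields $h_{\phi_1}(\Gamma) \ge \paren*{\frac{m_1\sin(\sigma\ell_0/2)}{M_1}}^2 h(\Gamma)$, as claimed.

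The one point requiring care — and the step I expect to be fiddliest rather than deep — is the lower bound $\phi_1(s) \ge m_1\sin(\sigma\ell_0/2)$ for $s$ in an outer edge: one needs $\sigma\ell_j \le \pi/2$ (which holds since $\phi_1$ is strictly decreasing on edges incident to $\calV_0$, as established in \S\ref{phi1.sec}), and then a monotonicity argument showing $\frac{\sin(\sigma x_j)}{\sin(\sigma\ell_j)}$ is minimized, over the relevant range, at a configuration giving the stated bound. Everything else is bookkeeping with the envelope estimates already proved.
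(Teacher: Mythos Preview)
Your numerator bound is the genuine gap. You claim $\phi_1(s) \ge m_1 \sin(\sigma\ell_0/2)$ for every cut point $s$, but this is simply false: nothing prevents $s$ from lying at parameter $x_j = \epsilon$ on an outer interval $I_j$, arbitrarily close to a Dirichlet vertex, where $\phi_1(s) = A_j\sin(\sigma\epsilon) \to 0$. In your case ``$x_j < \ell_0/2$'' there is no lower constraint on $x_j$, so the quantity $\frac{\sin(\sigma x_j)}{\sin(\sigma\ell_j)}$ has infimum $0$, not $\sin(\sigma\ell_0/2)$; the ``monotonicity check'' you defer cannot succeed. The denominator estimate $\min_i\int_{Y_i}\phi_1^2 \le M_1^2\min_i|Y_i|$ is fine, but the cut-by-cut comparison collapses without the numerator bound.

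This is precisely the obstruction the paper's proof is built around. The paper first uses Lemma~\ref{hphi.fell0.lemma} to restrict attention to cuts whose weighted ratio is at most $f(\ell_0)$, then splits into cases according to how $S$ meets the set $W = \{\dist(\cdot,\calV_0)\ge \ell_0/2\}$. Cuts lying entirely outside $W$ are shown to have ratio at least $f(\ell_0/2) > f(\ell_0)$ and are thereby excluded; for the remaining cuts one prunes the points of $S$ that lie in $\Gamma\backslash W$ (showing this only lowers the ratio) and is left with a modified cut $S'' \subset W$ on which the pointwise bound $\phi_1 \ge m_1\sin(\sigma\ell_0/2)$ \emph{does} hold. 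Your approach skips exactly this reduction step, and without it the argument does not go through.
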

\begin{proof}
By Lemma~\ref{hphi.fell0.lemma}, to estimate $h_{\phi_1}(\Gamma)$
we may limit our attention to Cheeger cuts $S$ for which
\begin{equation}\label{ratio.fell0}
\frac{\sum_S \phi_1^2}{\min(\int_{Y_1} \phi_1^2,\int_{Y_2} \phi_1^2)} \le f(\ell_0).
\end{equation}
By relabeling if necessary, we can also assume that the minimum in the denominator is the $Y_1$ integral, i.e.,
\begin{equation}\label{int.y1y2}
\int_{Y_1} \phi_1^2 \le \int_{Y_2} \phi_1^2.
\end{equation}
Let
\[
W := \set[\big]{q \in \Gamma:  \dist(q,\calV_0) \ge \ell_0/2}.
\]

\emph{Case 1:} Suppose that $S \cap W = \emptyset$. In the notation of \eqref{Ij.def},
\[
\Gamma\backslash W = \cup_j \set[\big]{0 \le x_j \le \ell_0/2}.
\]
Because $\phi_1$ is increasing as a function of $x_j$,
the convention \eqref{int.y1y2}  implies that $Y_1 \subset \Gamma\backslash W$ and 
$Y_2 \supset W$. Let $\calJ$ be the set of indices $j$ for which $S$ intersects $I_j$,
and set
\[
s_j = \max S \cap I_j.
\]
If the restriction of $\phi_1$ to $I_j$ is written as $A_j \sin (\sigma x_j)$, then
\[
\begin{split}
\sum_S \phi_1^2  &= \sum_{j\in \calJ} A_j^2 \sin^2 (\sigma s_j) \\
&= \sum_{j\in \calJ} f(s_j)  \int_0^{s_j} A_j^2 \sin^2 (\sigma x)\>dx.
\end{split}
\]
Since $f$ is decreasing and $s_j \le \ell_0/2$, this implies
\begin{equation}\label{ratio.fell2}
\sum_S \phi_1^2 \ge f(\ell_0/2) \int_{Y_1} \phi_1^2,
\end{equation}
which contradicts the assumption \eqref{ratio.fell0}.

\emph{Case 2:} Suppose that $S \cap W \ne\emptyset$ and $Y_1$ contains an interval 
$J \subset \Gamma\backslash W$. 
If $S'$ denotes the corresponding cut with the endpoints of $J$ deleted, then $Y_1$ is reduced 
to a component $Y_1' = Y_1\backslash J$, while $Y_2' = Y_2\cup J$.  The inequality \eqref{int.y1y2}
is still satisfied after the replacement, and 
\begin{equation}\label{ratio.J}
\sum_{\del J} \phi_1^2 \ge f(\ell_0/2) \int_{J} \phi_1^2,
\end{equation}
by the same argument used for \eqref{ratio.fell2}.
By the assumption \eqref{ratio.fell0}, and the fact that $f$ is strictly decreasing, 
we obtain 
\[
\frac{\sum_{S} \phi_1^2}{\int_{Y_1} \phi_1^2} < \frac{\sum_{\del J} \phi_1^2}{\int_{J} \phi_1^2}
\]
A comparison of cross-ratios then shows that
\[
\frac{\sum_{S} \phi_1^2}{\int_{Y_1} \phi_1^2} > \frac{\sum_{S'} \phi_1^2}{\int_{Y_1'} \phi_1^2}.
\]
That is, cutting the interval $J$ from $Y_1$ will reduce the Cheeger ratio.

\emph{Case 3:} Suppose that $S \cap W \ne\emptyset$ 
and $Y_1$ contains no interval in $\Gamma\backslash W$.
We continue to assume that the components $Y_j$ satisfy \eqref{ratio.fell0} and \eqref{int.y1y2}, which 
together imply that
\begin{equation}\label{ratio.y1}
\frac{\sum_S \phi_1^2}{\int_{Y_1} \phi_1^2} \le f(\ell_0).
\end{equation}
For a cut $S$ with these properties, the points of $S\backslash W$ (if any) correspond, 
in the parametrization \eqref{Ij.def}, to 
$x_j = b_j < \ell_0/2$, such that $ [0,b_j) \subset Y_2$. If we set 
\[
Z := \bigcap_{b_j \in S\backslash W} [0,b_j],
\]
then the set $S'' := S \cap W$ separates $\Gamma$ into components $Y_1'' := Y_1 \cup Z$ 
and $Y_2'' = Y_2\backslash Z$.

By \eqref{outer.bound}, $\phi_1$ satisfies a lower bound
\[
\min_W \phi_1 \ge m_1 \sin (\sigma\ell_0/2).
\]
We thus have
\begin{equation}\label{y1h.ineq}
\begin{split}
\frac{\sum_S \phi_1^2}{\int_{Y_1} \phi_1^2} 
&\ge \frac{\sum_{S''} \phi_1^2}{\int_{Y_1''} \phi_1^2} \\
&\ge \paren*{\frac{m_1 \sin (\sigma\ell_0/2)}{M_1}}^{\!2} \frac{ \#S''}{\abs{Y_1''}}.
\end{split}
\end{equation}

For $Y_2$ we can write the ratio as
\begin{equation}\label{sy2.pp}
\frac{\sum_S \phi_1^2}{\int_{Y_2} \phi_1^2} = 
\frac{\sum_{S''} \phi_1^2 + \sum_{S\backslash W} \phi_1^2}{\int_{Y_2''} \phi_1^2 + \int_{Z} \phi_1^2}.
\end{equation}
By \eqref{ratio.y1}, and the fact
\[
\sum_{S\backslash W} \phi_1^2 \ge f(\ell_0/2) \int_{Z} \phi_1^2,
\]
the decomposition \eqref{sy2.pp} shows that
\[
\frac{\sum_{S''} \phi_1^2}{\int_{Y_2''} \phi_1^2} \le \frac{\sum_S \phi_1^2}{\int_{Y_1} \phi_1^2}.
\]
The estimates of $\phi_1$ then yield
\begin{equation}\label{y2h.ineq}
\frac{\sum_S \phi_1^2}{\int_{Y_1} \phi_1^2} 
\ge \paren*{\frac{m_1 \sin (\sigma\ell_0/2)}{M_1}}^{\!2} \frac{ \#S''}{\abs{Y_2''}} .
\end{equation}
Combining \eqref{y1h.ineq} and \eqref{y2h.ineq} gives
\begin{equation}\label{case3.concl}
\frac{\sum_S \phi_1^2}{\int_{Y_1} \phi_1^2} 
\ge \paren*{\frac{m_1 \sin (\sigma\ell_0/2)}{M_1}}^{\!2} h(\Gamma).
\end{equation}

To summarize, Case 1 is ruled out by \eqref{ratio.fell0} and Case 2 can be reduced to
Case 3 with a reduction in the weighted Cheeger ratio. Hence the bound \eqref{case3.concl}
applies to $h_{\phi_1}(\Gamma)$.
\end{proof}

We can now complete the proof of Theorem~\ref{wcheeger.thm}. As noted above, the result follows 
from \eqref{gamma1.pt.eigv} if $\Gamma_1$ consists of a single point. Otherwise,
Propositions~\ref{phi1.min.prop} and \ref{hphi.prop} together show that
\[
\begin{split}
h_{\phi_1}(\Gamma) &\ge \paren[\Big]{c(L,\ell_0) \sin(\tfrac{\sigma \ell_0}2)}^{\!2} h(\Gamma) \\
&\ge \paren[\Big]{c(L,\ell_0) \sin(\tfrac{\pi \ell_0}{4L})}^{\!2} \frac{2}{L}.
\end{split}
\]
By Theorem~\ref{hphi.thm}, this proves \eqref{cll.lower} with
\[
C(L,\ell_0) = \frac{1}{L^2}  \paren[\Big]{c(L,\ell_0) \sin(\tfrac{\pi \ell_0}{4L})}^{\!4}.
\]

\section{Upper bounds for trees}\label{tree.sec}

The upper bounds for Dirichlet domains in $\bbR^n$ mentioned in the introduction are proven by creating a 
family of test functions from the products of eigenfunctions with an affine function on $\bbR^n$.  
The results of Nicaise \cite[Thm.~4.1]{Nicaise:1987} and Demirel-Harrell \cite[Thm.~2.5]{DH:2010}
use a similar construction, where an \emph{affine function} on a metric graph is interpreted as a continuous function which
is linear on each edge.  Throughout this section, we assume that 
$\Gamma$ is a metric tree, with $\calV_0$ consisting of the external vertices, meaning of degree one.

Our goal in this section is to adapt the techniques of Harrell-Stubbe \cite{HS:1997} to produce a
general eigenvalue estimate which generalizes the Nicaise bound \eqref{nicaise.hp}. Our first step
is to produce a family of affine functions whose derivatives cover $\Gamma$ uniformly in an average sense.

Let $\calA(\Gamma)$ denote the space of affine functions on $\Gamma$ which satisfy
standard vertex conditions at the points of $\calV_\std$. No vertex condition are imposed at the points of $\calV_0$.
If we interpret $\Gamma$ as an electric circuit, with each edge assigned a resistance equal to its length, then
functions $\calA(\Gamma)$ corresponds precisely to a voltage function satisfying the Ohm's and 
Kirchhoff's circuit laws. As Kirchhoff \cite{Kirchhoff:1847} demonstrated in 1847, there exists a voltage function
for any combination of external voltages applied at the points in $\calV_0$. Hence, $\calA(\Gamma)$
contains non-constant functions provided $\calV_0$ contains at least two points.

From Nicaise \cite[Lemma~4.2]{Nicaise:1987} we quote the following result. The original did not include a proof, 
so we will give one here. A similar result was derived independently in Demirel-Harrell \cite[Thm.~2.9]{DH:2010}, 
but without the restriction to three functions.
\begin{lemma}\label{Acover.lemma}
For a metric tree $\Gamma$, there exist functions $g_\alpha \in \calA(\Gamma)$ for $\alpha \in \set{1,2,3}$
such that on all edges, $\abs{g_\alpha'}$ equals $0$ or $1$ and
\begin{equation}\label{galpha.sum}
\sum_{\alpha=1}^3 \abs{g_\alpha'} = 2.
\end{equation}
\end{lemma}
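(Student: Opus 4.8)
The plan is to construct the three functions $g_1, g_2, g_3$ explicitly by orienting the tree and assigning slopes $\pm 1$ or $0$ to each edge in three distinct ways, then verifying that standard vertex conditions (the Kirchhoff law for $\calA(\Gamma)$) hold at interior vertices and that the pointwise sum of the absolute values of the derivatives equals $2$. Since $\Gamma$ is a tree, removing any edge $e$ disconnects it into two subtrees; this gives a natural way to define functions that are ``monotone across $e$.'' The key observation is combinatorial: at each interior vertex $v$ of degree $d \ge 3$, the Kirchhoff condition for an affine function with edge-slopes $s_i \in \{-1,0,1\}$ (measured as outgoing derivatives) is simply $\sum_{i=1}^d s_i = 0$, so we need, for each vertex, the multiset of incoming slope-labels across the three functions to balance.

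\textbf{Step 1: Reduce to the trivalent case.} First I would note that it suffices to prove the lemma when every interior vertex has degree exactly $3$. Indeed, any metric tree can be obtained from a trivalent metric tree by contracting certain edges to zero length (equivalently, a vertex of degree $d$ is a limit of a small trivalent tree with $d-2$ edges of length $\to 0$ and $d$ leaves); the functions $g_\alpha$ and the identity \eqref{galpha.sum} are stable under such contraction since the slopes are locally constant and the Kirchhoff condition passes to the limit. Alternatively, and perhaps more cleanly, one can argue directly: split a degree-$d$ vertex into a path of degree-$3$ vertices joined by auxiliary edges, prove the statement there, and observe the restriction to the original edges still works. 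I expect this reduction to be mostly bookkeeping.

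\textbf{Step 2: Construct the three functions on a trivalent tree.} For a trivalent tree, I would proceed by a proper $3$-edge-coloring argument in disguise, or more directly by the following: fix a planar embedding of the tree and, for each $\alpha \in \{1,2,3\}$, partition the edge set into those edges that ``separate'' the tree in a way indexed by $\alpha$. Concretely, at each interior vertex the three incident edges get the three labels $1,2,3$ in some consistent fashion (this is where one uses that a tree is bipartite / has no odd structure obstructing a consistent assignment). For a fixed $\alpha$, let $g_\alpha$ have slope $0$ on every edge labeled $\alpha$ and slopes $\pm 1$ on the other two edges at each such vertex, with signs chosen so they cancel. Propagating these sign choices along the tree is consistent precisely because the tree is simply connected: there are no cycles to create a monodromy obstruction. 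Then at any vertex the outgoing slopes of $g_\alpha$ are $\{0, +1, -1\}$ in some order, so $\sum |g_\alpha'| = 2$ at that vertex on the nose, and summing the label $\alpha$ over the three edges cycles through all of $\{1,2,3\}$, giving $\sum_\alpha |g_\alpha'(e)| = 2$ on every edge $e$ (each edge is the ``zero'' edge for exactly one $\alpha$ and a ``$\pm 1$'' edge for the other two).

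\textbf{The main obstacle} I anticipate is making the sign-propagation and the ``consistent labeling of the three edges at each vertex'' genuinely rigorous rather than hand-wavy, especially ensuring that the same function $g_\alpha$ is globally well-defined (single-valued and continuous) after the edge-by-edge slope assignment — this is exactly the integrability condition that a metric tree satisfies automatically. The cleanest route is probably induction on the number of edges: remove a pendant edge, apply the inductive hypothesis to the smaller tree, and check that the pendant edge (with no vertex condition at its leaf in $\calV_0$) can be assigned slopes extending the three functions while preserving \eqref{galpha.sum} and the Kirchhoff condition at the one interior vertex where it attaches. The base case is a single edge or a star, where the construction is immediate. Once the existence of a valid local extension at each step is verified — a finite check over the possible slope patterns at a degree-$3$ vertex — the lemma follows.
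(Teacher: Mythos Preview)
Your 3-edge-coloring approach in Step~2 is correct and genuinely different from the paper's argument. The paper proceeds by a different induction: it observes that any metric tree can be built from a single segment by successively attaching \emph{leaf-pairs} (a vertex together with two pendant edges), and extends the triple $\{g_\alpha\}$ across each new leaf-pair by a short case analysis depending on whether the attachment point is an interior or exterior vertex. Your route via a proper 3-edge-coloring of the trivalent tree is more structural --- the key point being that the complement of each color class is a disjoint union of paths, on which signed arc length gives the desired affine function --- and produces all three $g_\alpha$ simultaneously. Both arguments ultimately exploit simple connectivity to integrate the prescribed edge-slopes into a globally single-valued function.

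However, your suggested ``cleanest route'' of inducting by removing a \emph{single} pendant edge fails. If the inductive hypothesis yields $g_\alpha$ on the smaller tree satisfying Kirchhoff at every interior vertex, then at the attachment vertex the outgoing slopes of $g_\alpha$ already sum to zero. Re-attaching the pendant with outgoing slope $c_\alpha$ and imposing Kirchhoff forces $c_\alpha = 0$ for every $\alpha$, so $\sum_\alpha |g_\alpha'| = 0$ on the new edge, violating \eqref{galpha.sum}. This is precisely why the paper adds edges two at a time: a leaf-pair carries enough freedom to assign alternating slopes $\pm 1$ on the two new edges while preserving the vertex condition. Your Step~1 also needs a small correction: literally restricting the trivalent functions to the original edges breaks continuity at the reconstituted high-degree vertex (the values at the chain of auxiliary vertices differ). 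The right statement is that the \emph{slopes} on the original edges still satisfy Kirchhoff there --- the auxiliary-edge contributions cancel in pairs --- and one then integrates those slopes afresh over the original tree.
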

\begin{proof}
Let us refer to a subgraph of $\Gamma$ consisting of a vertex with two adjoining external edges (leaves) as a \emph{leaf-pair}. 
Trimming a leaf pair from a vertex of degree $\ge 3$ reduces its degree by 2. 
If all possible leaf-pairs are trimmed from a given vertex, the result either an artificial (degree 2) or 
an external vertex (degree 1).  By carrying out this trimming process as far as possible at each vertex, we eventually 
reduce $\Gamma$ to a single segment. Hence $\Gamma$ can be constructed by starting from a single segment and
attaching leaf-pairs successively. Each leaf-pair is added by gluing its vertex to any point on the graph, which could be an existing
vertex or an edge point.

The family $\set{g_\alpha}$ is constructed by induction, using this decomposition of $\Gamma$.
For the initial segment, we may choose an arbitrary parametrization $x$ and 
set $g_1(x) = x$, $g_2(x) = -x$, and $g_3(x) = 0$.

Now suppose the family $\set{g_\alpha}$ has been defined with the desired properties for a tree $\Gamma$. 
Let $\tilde{\Gamma}$ be a graph obtained
by adding a single leaf-pair to $\Gamma$. The extensions $\tilde{g}_\alpha$ may be defined as follows:
\begin{enumerate}
\item Suppose the leaf-pair is attached at an internal vertex of $\Gamma$ (possibly artificial), so that each $g_\alpha$ already satisfies 
standard vertex conditions at this point. We can extend the family so that both $\tilde{g}_1'$ and $\tilde{g}_2'$ alternate 
$\pm 1$ on edges of the leaf-pair, while $\tilde{g}_3$ is constant on these edges.  
\item If the leaf-pair is attached an an external vertex of $\Gamma$, then by construction two of the $g_\alpha$, say $\alpha=1,2$ will 
have derivatives $\pm 1$ at this vertex and $g_3$ will be constant. We extend  the family  so that $\tilde{g}_1'$ is zero on one leaf of the pair,
$\tilde{g}_2'$ is zero on the other, and the derivatives on the other leaves are chosen to satisfy the vertex condition. The third function,
whose derivative vanishes into the vertex, is extended so that $\tilde{g}_3'$ alternates $\pm 1$ on the new leaves.
\end{enumerate}
At every stage of the induction, each function satisfies the vertex conditions and
$\abs{g_\alpha'} = 1$ for two values of $\alpha$ and $\abs{g_\alpha'} =0$ for the third.
\end{proof}

Using the collection $\set{g_\alpha}$, we can prove the main result of this section,
a version of the general eigenvalue inequality from Harrell-Stubbe \cite[Thm.~5]{HS:1997}.
\begin{theorem}\label{HS.thm}
Let $\Gamma$ be a metric tree with Dirichlet conditions on external vertices.
For $z \in (\lambda_n,\lambda_{n+1}]$, suppose that $f$ is a positive function on the set $\set{\lambda_1,\dots,\lambda_n}$ such that 
$f(\lambda_j)/(z-\lambda_j)^2$ is nondecreasing with $j$.  Then
\[
\sum_{j=1}^n f(\lambda_j) \le 4 \sum_{j=1}^n \frac{f(\lambda_j)}{z-\lambda_j} \lambda_j.
\]
\end{theorem}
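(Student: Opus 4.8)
The plan is to follow the Harrell--Stubbe commutator scheme, using the affine functions $g_\alpha$ from Lemma~\ref{Acover.lemma} as the ``coordinate'' multipliers in place of the Cartesian coordinates $x_k$ on $\bbR^n$. First I would fix $z \in (\lambda_n,\lambda_{n+1}]$ and, for each eigenfunction $\phi_j$ ($j \le n$) and each $\alpha \in \set{1,2,3}$, consider the projection of $g_\alpha \phi_j$ onto the span of $\set{\phi_1,\dots,\phi_n}$, writing $P_\alpha \phi_j$ for the component orthogonal to this span. Multiplying $-\Delta$ by $g_\alpha$ produces the commutator $[-\Delta, g_\alpha] = -2g_\alpha' \frac{d}{dx} - g_\alpha''$; since $g_\alpha$ is affine on each edge, $g_\alpha'' = 0$ away from vertices, and because $g_\alpha$ satisfies standard vertex conditions at $\calV_\std$ there are no boundary terms there (at $\calV_0$ the eigenfunctions vanish, so those boundary terms also drop). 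Thus the commutator reduces cleanly to $-2g_\alpha'\phi_j'$, and the relevant ``kinetic'' quantity $\norm{g_\alpha' \phi_j}^2$ will appear on the right-hand side.

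The core of the argument is the standard pair of estimates from \cite{HS:1997}. On one hand, the commutator identity together with the spectral gap below $z$ yields, for each $j$ and $\alpha$,
\[
\norm{g_\alpha'\phi_j}^2 \le \frac{1}{?}\ \text{(terms)},
\]
more precisely a bound of the form $(z-\lambda_j)\brak{g_\alpha\phi_j, P_\alpha(g_\alpha\phi_j)} \ge \ldots$, controlling $\norm{g_\alpha'\phi_j}^2$ from one side. On the other hand, a Cauchy--Schwarz / Bessel estimate bounds $\brak{g_\alpha'\phi_j, \phi_j}$-type quantities and the energy $\langle g_\alpha\phi_j, -\Delta(g_\alpha\phi_j)\rangle = \lambda_j\norm{g_\alpha\phi_j}^2 + \norm{g_\alpha'\phi_j}^2$ from the other. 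Introducing the weights $f(\lambda_j)$, the monotonicity hypothesis that $f(\lambda_j)/(z-\lambda_j)^2$ is nondecreasing is exactly what is needed to apply the Chebyshev-type rearrangement (as in \cite{HS:1997}) that converts the per-$j$ inequalities into the summed inequality. Summing over $\alpha = 1,2,3$ and invoking the key identity $\sum_\alpha \abs{g_\alpha'}^2 = \sum_\alpha \abs{g_\alpha'} = 2$ from \eqref{galpha.sum} (valid since each $\abs{g_\alpha'} \in \set{0,1}$) collapses the $\alpha$-sum: $\sum_\alpha \norm{g_\alpha'\phi_j}^2 = 2\norm{\phi_j}^2 = 2$ and similarly $\sum_\alpha \norm{g_\alpha'\phi_j}^2$ on the paired side produces the factor that ultimately gives the constant $4$.

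Concretely, the final inequality should emerge by writing, for each $\alpha$, the Harrell--Stubbe bound
\[
\sum_{j=1}^n f(\lambda_j)\, \norm{g_\alpha'\phi_j}^2 \le 2\sum_{j=1}^n \frac{f(\lambda_j)}{z-\lambda_j}\,\Big\langle \text{(commutator term)}\Big\rangle,
\]
then summing on $\alpha$, using \eqref{galpha.sum} on the left to get $\sum_j 2 f(\lambda_j)$, and on the right bounding $\sum_\alpha$ of the commutator terms by $2\lambda_j$ (again via $\sum_\alpha \abs{g_\alpha'}^2 = 2$ applied to $\norm{g_\alpha'\phi_j}^2$, whose $\alpha$-sum is $2\norm{\phi_j'}\cdot$-weighted, i.e.\ contributes $\lambda_j$ after the $\tfrac12$ from the commutator constant). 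Dividing by $2$ yields $\sum_j f(\lambda_j) \le 4\sum_j \frac{f(\lambda_j)}{z-\lambda_j}\lambda_j$, as claimed.

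The main obstacle, and the place requiring genuine care rather than bookkeeping, is justifying the integration by parts / commutator manipulations across the vertices — in particular verifying that no boundary contributions survive at $\calV_\std$ despite $g_\alpha$ not being smooth there. This hinges on the fact that $g_\alpha \in \calA(\Gamma)$ satisfies standard (Kirchhoff--Neumann) conditions, so for an eigenfunction $\phi_j$ (which also satisfies standard conditions) the sum of boundary terms $\sum_{e \ni v} g_\alpha(v)\,\phi_j'(v)\big|_{e}$ telescopes to $g_\alpha(v)\sum_{e\ni v}\phi_j'(v)\big|_e = 0$, and the cross terms involving $g_\alpha'$ are handled edge-by-edge since $g_\alpha'$ is piecewise constant. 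One must also confirm that products like $g_\alpha \phi_j$ lie in the form domain (they vanish at $\calV_0$ and are $H^1$, so they do) so that the variational characterization of $z < \lambda_{n+1}$ may legitimately be applied to their projections. Once these domain issues are settled, the remainder is the algebra of \cite[Thm.~5]{HS:1997} transcribed verbatim, with $\sum_\alpha \abs{g_\alpha'}^2 = 2$ playing the role of $\sum_k \abs{\nabla x_k}^2 = n$ (here the effective dimension is $2$, which is why the constant is $4 = 2\cdot 2$ rather than $4/n$ scaled).
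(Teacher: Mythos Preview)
Your proposal is correct and follows essentially the same route as the paper: use the affine functions $g_\alpha$ from Lemma~\ref{Acover.lemma}, exploit that the commutator $[-\Delta,g_\alpha]$ reduces to $-2g_\alpha'\partial_x$, invoke the Harrell--Stubbe argument with the monotonicity hypothesis on $f(\lambda_j)/(z-\lambda_j)^2$, and collapse the $\alpha$-sum via $\sum_\alpha \abs{g_\alpha'}^2=2$ so that $\sum_\alpha\norm{g_\alpha'\phi_j}^2=2$ on the left and $\sum_\alpha\norm{g_\alpha'\phi_j'}^2=2\lambda_j$ on the right. The paper packages this slightly differently, writing $D_\alpha := \tfrac12[\Delta,G_\alpha]$ and the starting identity as a trace $\sum_j f(\lambda_j) = \tfrac12\sum_\alpha \tr\!\big(P_n f(-\Delta)[D_\alpha,G_\alpha]\big)$ before expanding, but this is only a notational difference from your per-$\alpha$ variational formulation; the substance and the constants match.
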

\begin{proof}
Let $G_\alpha$ be the multiplication operator on $L^2(\Gamma)$ associated to $g_\alpha$.  We define a corresponding set of first-order
differential operators
\[
D_\alpha := \frac12 [\Delta, G_\alpha].
\]
On an edge parametrized by $x$, $D_\alpha$ acts as $(\del_x g_\alpha) \del_x$.  Since $g_\alpha'$ takes values in $\set{0,\pm 1}$,
the commutator $[D_\alpha, G_\alpha]$ is the projection onto the support of $g_\alpha'$.  By the construction in 
Lemma~\ref{Acover.lemma},
\[
\sum_{\alpha=1}^3 [D_\alpha, G_\alpha] = 2.
\]

Thus we can write
\[
\begin{split}
\sum_{j=1}^n f(\lambda_j) &= \tr \paren*{P_n f(-\Delta)} \\
&= \frac12 \sum_{\alpha=1}^3 \tr \paren[\big]{P_n f(-\Delta)[D_\alpha, G_\alpha]},
\end{split}
\]
where $P_n$ denotes the spectral projection onto the eigenspace for
$\set{\lambda_1,\dots,\lambda_n}$.
Expanding the trace, just as in the proof of \cite[Thm.~1]{HS:1997}, then gives
\[
\begin{split}
\sum_{j=1}^n f(\lambda_j)  &= - \sum_{\alpha=1}^3 \sum_{j=1}^n \sum_{\substack{m=1 \\ \lambda_m \ne \lambda_j}}^n 
\frac{f(\lambda_j) - f(\lambda_m)}{\lambda_j - \lambda_m} \abs{\brak{D_\alpha \phi_j, \phi_m}}^2 \\
&\qquad + 2 \sum_{\alpha=1}^3 \sum_{j=1}^n \sum_{q = n+1}^\infty \frac{f(\lambda_j)}{\lambda_q - \lambda_j} \abs{\brak{D_\alpha \phi_j, \phi_q}}^2.
\end{split}
\]
By the hypotheses on $f$, the argument from proof of \cite[Thm.~5]{HS:1997} then applies directly to give
\[
\sum_{j=1}^n f(\lambda_j) \le 2\sum_{\alpha=1}^3 \sum_{j=1}^n \frac{f(\lambda_j)}{z-\lambda_j} \norm{D_\alpha \phi_j}^2.
\]
By the construction of $g_\alpha$,
\[
\sum_{\alpha=1}^3 \norm{D_\alpha \phi_j}^2 = 2 \norm{\phi_j'}^2 = 2\lambda_j,
\]
which completes the proof.
\end{proof}

Taking $f=1$ in Theorem~\ref{HS.thm} yields the Hile-Protter bound \eqref{nicaise.hp} obtained by Nicaise.
Using $f(\lambda) = (z-\lambda)^2$ gives an inequality
\begin{equation}\label{DH.ineq}
\sum_{j=1}^n (z - \lambda_j)(z - 5 \lambda_j) \le 0,
\end{equation}
for $z \in [\lambda_n,\lambda_{n+1}]$, which was obtained previously by Demirel-Harrell \cite[Eq.~(3.15)]{DH:2010}.
The discriminant of the quadratic 
polynomial on the left side of \eqref{DH.ineq} is positive and the roots must lie outside the interval $(\lambda_n, \lambda_{n+1})$.
This yields the following result, analogous to \cite[Prop.~6]{HS:1997}:
\begin{theorem}\label{quadratic.gap.thm}
For $\Gamma$ a metric tree with Dirichlet vertices on the external vertices, the quantity
\[
D_n :=  \paren[\bigg]{\frac{3}{n} \sum_{j=1}^n \lambda_j}^{\!2} - \frac{5}{n} \sum_{j=1}^n \lambda_j^2
\]
satisfies $D_n \ge 0$ for all $n \ge 2$.  Furthermore, the eigenvalues satisfy the inequalities
\[
\lambda_{n} \ge \frac{3}{n} \sum_{j=1}^n \lambda_j - \sqrt{D_n},\qquad 
\lambda_{n+1} \le \frac{3}{n} \sum_{j=1}^n \lambda_j + \sqrt{D_n},
\] 
and hence
\[
\lambda_{n+1} - \lambda_{n} \le 2 \sqrt{D_n}.
\]
\end{theorem}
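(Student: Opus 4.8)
The plan is to read everything off from the single inequality \eqref{DH.ineq}, which the discussion above has already derived from Theorem~\ref{HS.thm} by choosing $f(\lambda) = (z-\lambda)^2$. Expanding the product, consider
\[
Q(z) := \sum_{j=1}^n (z-\lambda_j)(z-5\lambda_j) = n z^2 - 6z\sum_{j=1}^n \lambda_j + 5\sum_{j=1}^n \lambda_j^2,
\]
an upward-opening parabola in the variable $z$. Inequality \eqref{DH.ineq} asserts precisely that $Q(z)\le 0$ for every $z$ in the (nonempty) closed interval $[\lambda_n,\lambda_{n+1}]$.

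The first step is to observe that, since $Q$ is a quadratic with positive leading coefficient which attains a nonpositive value, its discriminant (as a polynomial in $z$) must be nonnegative. A direct computation shows this discriminant equals $4n^2 D_n$, so $D_n \ge 0$ for all $n \ge 2$, and the two necessarily real roots of $Q$ are exactly $\frac{3}{n}\sum_{j=1}^n \lambda_j \pm \sqrt{D_n}$, matching the quantities named in the statement.

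The second step is purely a comparison of intervals. Since $Q(z) \le 0$ holds exactly on the closed interval bounded by its two roots, the containment $[\lambda_n,\lambda_{n+1}] \subseteq \bigl[\frac{3}{n}\sum_j \lambda_j - \sqrt{D_n},\ \frac{3}{n}\sum_j \lambda_j + \sqrt{D_n}\bigr]$ must hold. Comparing left endpoints gives $\lambda_n \ge \frac{3}{n}\sum_j \lambda_j - \sqrt{D_n}$, and comparing right endpoints gives $\lambda_{n+1} \le \frac{3}{n}\sum_j \lambda_j + \sqrt{D_n}$. Adding the first inequality with its sign reversed to the second cancels the sum and yields $\lambda_{n+1} - \lambda_n \le 2\sqrt{D_n}$.

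There is essentially no obstacle here beyond bookkeeping; this is the metric-tree analogue of \cite[Prop.~6]{HS:1997}, all of whose analytic content is already contained in Theorem~\ref{HS.thm}. The only point meriting a word of care is the degenerate case $\lambda_n = \lambda_{n+1}$, where $[\lambda_n,\lambda_{n+1}]$ collapses to a single point; but \eqref{DH.ineq} still supplies $Q(\lambda_n)\le 0$ there, which is all that the discriminant and bracketing arguments require.
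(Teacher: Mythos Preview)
Your argument is correct and matches the paper's approach exactly: the paper's proof consists of the sentence preceding the theorem, which notes that the discriminant of the quadratic in \eqref{DH.ineq} is nonnegative and that its roots must bracket $[\lambda_n,\lambda_{n+1}]$, and you have simply written out those computations in full.
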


For $n=1$ we have $D_1 = 4\lambda_1^2$, so the estimate reduces to $\lambda_2/\lambda_1 \le 5$, 
equivalent to \eqref{nicaise.ppw} and \eqref{nicaise.hp} but weaker than \eqref{lratio.nicaise} .  
To compare the estimates for $\lambda_3$, let us define the moments for the first pair of eigenvalues,
\[
a_1 := \tfrac12 (\lambda_1 + \lambda_2), \qquad a_2 := \tfrac12 (\lambda_1^2 + \lambda_2^2).
\]
Then the estimate from \eqref{quadratic.gap.thm} reads
\begin{equation}\label{quad3.bd}
\lambda_3 \le 3a_1 + \sqrt{9a_1^2 - 5a_2}.
\end{equation}
On the other hand, the Hile-Protter type bound \eqref{nicaise.hp} reduces to
\[
\lambda_3 \le 3a_1 + \sqrt{9a_1^2 - 5\lambda_1\lambda_2}.
\]
Since $\lambda_1\lambda_2 \le a_2$, by the geometric mean inequality, the bound \eqref{quad3.bd}
is stronger.

\section{Extensions of the upper bound}\label{extension.sec}

In this section we investigate the possibility for extending the upper bounds discussed in \S\ref{tree.sec} to
graphs which are modifications of trees. We will consider two possibilities: adding edges between existing vertices
of the tree and attaching pendant graphs.

We cannot expect to produce a set of affine functions satisfying \eqref{galpha.sum} on a general graph. 
However, it is worth noting that the existence of a single function $h \in \calA(\Gamma)$ gives a bound 
in terms of $\phi_1$.  Returning to the electric circuit analogy from \S\ref{tree.sec}, let us define a (scalar)
\emph{current} as a function $\eta: \Gamma \to [0,\infty)$ which is constant on each edge and
which, under some choice of edge orientations, satisfies Kirchhoff's current law.  This is equivalent to the condition
\begin{equation}\label{eta.hprime}
\eta = \abs{h'}\text{ for some }h \in \calA(\Gamma).
\end{equation}

The following inequality is implicit in the proofs of the Payne-P\'olya-Weinberger estimate $\lambda_2/\lambda_1 \le 5$
in various cases.  For convenience we include a direct proof.
\begin{lemma}\label{l21.lemma}
If $\Gamma$ is a metric graph that admits a non-zero current function $\eta$, then
\begin{equation}\label{l21.estimate}
\lambda_2 - \lambda_1 \le 4\> \frac{\norm{\eta\phi_1'}^2}{\norm{\eta\phi_1}^2}.
\end{equation}
\end{lemma}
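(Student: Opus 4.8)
The plan is to use the ``folklore'' variational principle \eqref{folklore} with a cleverly chosen test function built from the current $\eta$. Recall that \eqref{folklore} states
\[
\lambda_2 - \lambda_1 = \inf \set*{\frac{\norm{F'\phi_1}^2}{\norm{F\phi_1}^2}:\> \int_\Gamma F \phi_1^2 = 0}.
\]
Write $\eta = \abs{h'}$ for some $h \in \calA(\Gamma)$ as in \eqref{eta.hprime}. The natural first guess is $F = h$, but $h$ need not be orthogonal to $\phi_1$ in the $\phi_1^2$-weighted inner product, so I would instead take $F = h - c$ for the constant $c := \int_\Gamma h \phi_1^2 / \norm{\phi_1}^2$, which forces $\int_\Gamma F\phi_1^2 = 0$. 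Since $F' = h'$, the numerator is $\norm{F'\phi_1}^2 = \norm{h'\phi_1}^2 = \norm{\eta\phi_1}^2$ (using $\eta = \abs{h'}$), and \eqref{folklore} gives
\[
\lambda_2 - \lambda_1 \le \frac{\norm{\eta\phi_1}^2}{\norm{(h-c)\phi_1}^2}.
\]
This is not yet the claimed bound — the roles of $\phi_1$ and $\phi_1'$ appear swapped relative to \eqref{l21.estimate}. The fix is the standard Payne--Pólya--Weinberger trick: instead of feeding $h-c$ directly into the variational principle, integrate by parts to relate $\norm{(h-c)\phi_1}^2$ back to $\norm{\eta\phi_1'}^2$, exploiting that $\phi_1$ is the ground state.

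Concretely, I would estimate the denominator from below as follows. Set $g := (h-c)\phi_1$; this is piecewise $C^1$, satisfies the vertex conditions (the standard conditions at $\calV_\std$ because $h\in\calA(\Gamma)$ and $\phi_1$ does, and it simply vanishes at $\calV_0$ because $\phi_1$ does), and is orthogonal to $\phi_1$ since $\int_\Gamma (h-c)\phi_1^2 = 0$. Hence by the min-max characterization of $\lambda_2$ (not needed) — rather, the cleaner route: multiply the eigenvalue equation $-\Delta\phi_1 = \lambda_1\phi_1$ by $(h-c)^2\phi_1$ and integrate. After integration by parts one obtains an identity relating $\int_\Gamma (h')^2 \phi_1^2 = \norm{\eta\phi_1}^2$, the quantity $\int_\Gamma ((h-c)\phi_1)'^2$, and $\lambda_1\norm{(h-c)\phi_1}^2$; this is exactly the metric-graph analogue of the computation in \eqref{l2.fphi}--\eqref{l21.fpp} with the roles of the affine function and the eigenfunction interchanged. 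The resulting identity is
\[
\norm*{((h-c)\phi_1)'}^2 = \lambda_1 \norm{(h-c)\phi_1}^2 + \norm{\eta\phi_1}^2,
\]
and then $((h-c)\phi_1)' = \eta\,\phi_1' + (h-c)\phi_1'$-type expansion combined with Cauchy--Schwarz turns the bound $\lambda_2 - \lambda_1 \le \norm{\eta\phi_1}^2/\norm{(h-c)\phi_1}^2$ into \eqref{l21.estimate}, the factor $4$ coming from a $2ab \le a^2+b^2$ step applied to cross terms.

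The step I expect to be the main obstacle is making the integration-by-parts bookkeeping at the vertices fully rigorous: one must check that all boundary terms vanish, which requires that every test function in sight ($h-c$, $(h-c)^2$, $(h-c)\phi_1$, and products with $\phi_1'$) either satisfies the standard Kirchhoff condition at $\calV_\std$ or is killed by the Dirichlet condition at $\calV_0$. The subtlety is that $h$ satisfies \emph{standard} conditions at $\calV_\std$ but is \emph{not} constrained at $\calV_0$ — however $\phi_1 = 0$ there, so products with $\phi_1$ are fine, and $\eta = \abs{h'}$ is well-defined edgewise. Once this is confirmed, the computation is a routine adaptation of the argument in Section~\ref{wcheeger.sec}; I would present it as the short ``direct proof'' the paper promises, essentially reproducing the $\norm{f'\phi_1}^2$ manipulation of Theorem~\ref{hphi.thm} with $f$ replaced by the affine function.
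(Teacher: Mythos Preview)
Your strategy is essentially the paper's: set $u=(h-c)\phi_1$ with $c$ chosen so that $u\perp\phi_1$, obtain the intermediate bound $\lambda_2-\lambda_1\le\norm{\eta\phi_1}^2/\norm{u}^2$, then upgrade to \eqref{l21.estimate} via Cauchy--Schwarz. Your route through the folklore principle \eqref{folklore} is a pleasant shortcut for the first step --- the paper instead applies plain min-max to $u$ and then derives the identity $\norm{u'}^2=\lambda_1\norm{u}^2+\norm{\eta\phi_1}^2$, arriving at exactly the same intermediate inequality. Your vertex-condition bookkeeping is also fine.

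The sketch has a genuine gap at the last step. The product-rule expansion $u'=h'\phi_1+(h-c)\phi_1'$ (your ``$\eta\,\phi_1'$'' is a slip) combined with the identity you wrote only yields $\lambda_1\norm{u}^2=2\brak{u,h'\phi_1'}+\norm{(h-c)\phi_1'}^2$, and Cauchy--Schwarz on that cross term does not produce the required inequality $\norm{\eta\phi_1}^4\le 4\norm{u}^2\norm{\eta\phi_1'}^2$. What is missing is a \emph{second} integration by parts, moving the derivative onto $(h-c)^2$ rather than $\phi_1^2$ and using that $h$ is affine ($h''=0$ edgewise):
\[
-2\brak{u,\,h'\phi_1'}=-\tfrac12\int_\Gamma\bigl((h-c)^2\bigr)'(\phi_1^2)'=\tfrac12\int_\Gamma\bigl((h-c)^2\bigr)''\phi_1^2=\int_\Gamma(h')^2\phi_1^2=\norm{\eta\phi_1}^2.
\]
Cauchy--Schwarz applied to $\norm{\eta\phi_1}^2=-2\brak{u,h'\phi_1'}$ then gives $\norm{\eta\phi_1}^2\le 2\norm{u}\,\norm{\eta\phi_1'}$, and combining with the intermediate bound yields \eqref{l21.estimate} with the factor~$4$. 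This is precisely how the paper closes the argument.
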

\begin{proof}
Given $\eta$, choose $h$ according to \eqref{eta.hprime}.  By shifting $h$ by a constant if necessary,
we can assume that $u := h\phi_1$ is orthogonal to $\phi_1$.  Then min-max gives the estimate
\begin{equation}\label{l2u.minmax}
\lambda_2 \le \frac{\norm{u'}^2}{\norm{u}^2}.
\end{equation}
Because $u$ satisfies the vertex conditions, by the assumptions on $h$, we can integrate by parts to compute
\[
\begin{split}
\norm{u'}^2 &= \brak{u,-\Delta u} \\
&= \brak{u,\lambda_1 u - 2h'\phi_1'} \\
&= \lambda_1 \norm{u}^2 - 2\brak{u, h'\phi_1'}.
\end{split}
\]
By \eqref{l2u.minmax} this gives
\begin{equation}\label{gap.A}
\lambda_2 - \lambda_1 \le \frac{A}{\norm{u}^2},
\end{equation}
where 
\[
A := - 2\brak{u, h'\phi_1'}.
\]
The Cauchy-Schwarz estimate gives
\begin{equation}\label{a2.cs}
A^2 \le 4 \norm{u}^2 \norm{\eta\phi_1'}^2.
\end{equation}
On the other hand, we can compute using integration by parts,
\[
\begin{split}
A
&= - \frac12 \int_\Gamma (h^2)' (\phi_1^2)' \\
&= \frac12 \int_\Gamma (h^2)'' \phi_1^2 \\
&= \norm{\eta\phi_1}^2.
\end{split}
\]
By \eqref{a2.cs} this yields
\[
\frac{A}{\norm{u}^2} \le \frac{\norm{\eta\phi_1'}^2}{\norm{\eta\phi_1}^2},
\]
and the result follows from \eqref{gap.A}.
\end{proof}

As noted in \S\ref{tree.sec}, any graph $\Gamma$ for which $\calV_0$ has at least two vertices
will admit non-zero current functions. However, to obtain universal bounds from \eqref{l21.estimate} we need either some uniformity in the
choice of $\eta$ or some control over the behavior of $\phi_1$.

\subsection{Saguaro graphs}
The first observation is that the conclusion of Lemma~\ref{Acover.lemma} continues to hold for 
graphs constructed from trees by adding edges uniformly. That is, suppose $\Gamma$ is constructed from a tree
by replacing each internal edge by a pumpkin with $k>1$ edges of the same length, and each external 
edge by a star with k edges of the same length. We will call the result a \emph{saguaro} graph, after the 
tree-like cactus. 

\begin{figure}
\includegraphics[scale=.5]{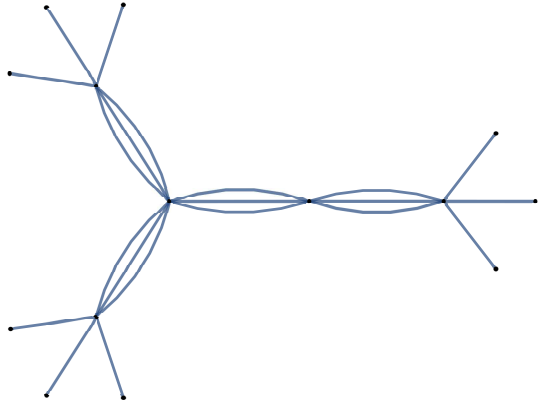}
\caption{A saguaro graph.}\label{saguaro.fig}
\end{figure}

Given a saguaro graph $\Gamma$ based on the tree $\Gamma_{\rm T}$, we can apply Lemma~\ref{Acover.lemma} 
to produce a trio of affine functions $\set{g_\alpha}$ on $\Gamma_{\rm T}$.
These functions can be extended to elements of $\calA(\Gamma)$  satisfying \eqref{galpha.sum} 
by simply replicating the values on parallel edges.
The existence of this tree implies that all of the universal tree bounds carry over to $\Gamma$, including 
\[
\frac{\lambda_2}{\lambda_1} \le 2 + \sqrt{5}, \qquad \lambda_{n+1} - \lambda_n \le \frac{4}{n} \sum_{j=1}^n \lambda_j,
\] 
from Nicaise \cite[\S4]{Nicaise:1987}, along with the general bounds from Theorem~\ref{HS.thm}.

We can also consider the case of a modified saguaro graph, where the 
number of edges of each pumpkin or star varies between values $k_{\rm min}$ and $k_{\rm max}$.
We continue to assume that the edges have equal lengths within each pumpkin or star.
Suppose $\set{g_\alpha}$ denotes the trio of affine functions produced by Lemma~\ref{Acover.lemma}
for the underlying tree $\Gamma_{\rm T}$. On $\Gamma$ we can produce a corresponding set of currents $\set{\eta_\alpha}$
by subdividing the current $g_\alpha'$ among the parallel edges in each segment.  
That is, on a segment (pumpkin or star) with $k$ edges, 
we set $\eta_\alpha := \abs{g'_\alpha}/k$. Because $\sum_\alpha  \abs{g'_\alpha}^2 = 1$ on 
$\Gamma_{\rm T}$, this construction gives 
\[
\frac{1}{k^2_{\rm max}} \le \sum_{\alpha=1}^3 \eta_\alpha^2 \le \frac{1}{k^2_{\rm min}}.
\]
By summing the inequality 
\[
(\lambda_2 - \lambda_1) \norm{\eta_\alpha\phi_1}^2  \le 4 \norm{\eta_\alpha\phi_1'}^2
\]
over $\alpha$, we obtain
\[
\frac{\lambda_2}{\lambda_1} \le 1 + 4 \frac{k^2_{\rm max}}{k^2_{\rm min}}.
\]

\subsection{Ornamented trees}

Suppose $\Gamma$ is constructed from a tree graph $\Gamma_{\rm T}$ by attaching pendant graphs $P_j$, $j=1,\dots m$,
to internal (and possibly artificial) vertices of $\Gamma_{\rm T}$.  Let $q_j \in \calV_\std$ denote the attachment vertex for $P_j$.
See Figure~\ref{pendant.fig} for an illustration of this \emph{ornamented tree} graph.

\begin{proposition}
Let $\Gamma$ be an ornamented tree consisting of metric tree $\Gamma_{\rm T}$, with Dirichlet conditions at exterior vertices,
and pendants $P_1, \dots, P_m$ each containing at least one Dirichlet vertex. 
Suppose that
\[
\lambda_1 \le \lambda_1(P_j)
\]
for each $j$, where $\set{\lambda_n}$ denotes the spectrum of $\Gamma$ and 
$\lambda_1(P_j)$ is the first eigenvalue of $P_j$ defined by assigning standard boundary conditions at 
the attachment point $q_j$.
Then
\[
\frac{\lambda_2}{\lambda_1} \le 5.
\]
In particular, this inequality holds if 
\begin{equation}\label{pj.lmax}
\abs{P_j} \le \frac12 \ell_{\rm max}(\Gamma_T)
\end{equation}
for each $j$.
\end{proposition}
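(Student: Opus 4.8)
The plan is to deduce the bound from Lemma~\ref{l21.lemma} by choosing a family of current functions that live entirely on the tree part $\Gamma_{\rm T}$ and vanish on the pendants, and then to use the hypothesis $\lambda_1 \le \lambda_1(P_j)$ to show that the Dirichlet energy of $\phi_1$ restricted to $\Gamma_{\rm T}$ does not exceed $\lambda_1\int_{\Gamma_{\rm T}}\phi_1^2$. Throughout I use that $\phi_1$ is an eigenfunction, so $\int_\Gamma (\phi_1')^2 = \lambda_1 \int_\Gamma \phi_1^2$, and that the edge sets of $\Gamma_{\rm T}$ and of $P_1,\dots,P_m$ partition that of $\Gamma$, so every $L^2$ integral splits as $\int_\Gamma = \int_{\Gamma_{\rm T}} + \sum_j \int_{P_j}$.

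First I would build the currents. Apply Lemma~\ref{Acover.lemma} to $\Gamma_{\rm T}$ to obtain $g_1,g_2,g_3 \in \calA(\Gamma_{\rm T})$ such that $(\abs{g_1'},\abs{g_2'},\abs{g_3'})$ is a permutation of $(1,1,0)$ on each edge. Extend each $g_\alpha$ to $\Gamma$ by setting it equal to the constant $g_\alpha(q_j)$ on each pendant $P_j$. This extension lies in $\calA(\Gamma)$: at the attachment point $q_j$ — an interior vertex or edge point of $\Gamma_{\rm T}$, at which the outgoing $g_\alpha$-derivatives already sum to zero — the newly attached pendant edges carry zero derivative and so leave Kirchhoff's law intact, while at every vertex interior to $P_j$ the extended function is locally constant, and continuity at $q_j$ is automatic. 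Thus $\eta_\alpha := \abs{g_\alpha'}$ is, in the sense of \eqref{eta.hprime}, a current on $\Gamma$ that vanishes identically on each $P_j$. Writing \eqref{l21.estimate} in the cleared form $(\lambda_2-\lambda_1)\norm{\eta_\alpha\phi_1}^2 \le 4\norm{\eta_\alpha\phi_1'}^2$ for each $\alpha$ (a term with $\eta_\alpha\equiv 0$ being vacuous), summing over $\alpha$, and using that $\sum_\alpha \eta_\alpha^2$ equals $2$ on $\Gamma_{\rm T}$ and $0$ on the pendants, everything collapses to
\[
(\lambda_2-\lambda_1)\int_{\Gamma_{\rm T}}\phi_1^2 \le 4\int_{\Gamma_{\rm T}}(\phi_1')^2 .
\]
Since $\phi_1 > 0$ off $\calV_0$, the factor $\int_{\Gamma_{\rm T}}\phi_1^2$ is positive, so it remains to show $\int_{\Gamma_{\rm T}}(\phi_1')^2 \le \lambda_1\int_{\Gamma_{\rm T}}\phi_1^2$.

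By the energy identity and the partition of the edge set, this last inequality is equivalent to $\int_{P_j}(\phi_1')^2 \ge \lambda_1\int_{P_j}\phi_1^2$ for every $j$, and here the hypothesis is exactly what is needed. The restriction $\phi_1|_{P_j}$ belongs to $H^1(P_j)$ and vanishes at the Dirichlet vertices of $P_j$ (which are Dirichlet vertices of $\Gamma$), so it is an admissible competitor in the variational principle for $\lambda_1(P_j)$, the first eigenvalue of $P_j$ with a standard condition at $q_j$ (a natural condition, imposing nothing on the form domain); and it is not the zero function because $\phi_1(q_j) > 0$. Hence
\[
\lambda_1(P_j) \le \frac{\int_{P_j}(\phi_1')^2}{\int_{P_j}\phi_1^2},
\]
and combining with $\lambda_1 \le \lambda_1(P_j)$ gives $\int_{P_j}(\phi_1')^2 \ge \lambda_1\int_{P_j}\phi_1^2$. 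Summing over $j$ and subtracting from $\int_\Gamma(\phi_1')^2 = \lambda_1\int_\Gamma\phi_1^2$ yields the required bound on $\int_{\Gamma_{\rm T}}(\phi_1')^2$, whence $\lambda_2 - \lambda_1 \le 4\lambda_1$, i.e. $\lambda_2/\lambda_1 \le 5$.

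For the sufficient condition \eqref{pj.lmax}: a test function supported on a longest edge of $\Gamma_{\rm T}$, viewed as a path in $\Gamma$, gives (as in \eqref{lmax.bound}) the bound $\lambda_1 \le \pi^2/\ell_{\rm max}(\Gamma_{\rm T})^2$; on the other hand, since $P_j$ contains a Dirichlet vertex, \eqref{lambda1.lower} with $k=1$ applied to $P_j$ gives $\lambda_1(P_j) \ge \pi^2/(4\abs{P_j}^2)$. If $\abs{P_j} \le \tfrac12\ell_{\rm max}(\Gamma_{\rm T})$, these combine to $\lambda_1(P_j) \ge \pi^2/\ell_{\rm max}(\Gamma_{\rm T})^2 \ge \lambda_1$, so the main hypothesis holds. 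The step I expect to require the most care is the pair of structural checks — that the $g_\alpha$ genuinely extend to $\calA(\Gamma)$ once the pendant edges are given zero current, and that $\phi_1|_{P_j}$ is a legitimate (nonzero) test function for $\lambda_1(P_j)$; the remainder is bookkeeping with the quadratic forms.
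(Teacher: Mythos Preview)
Your proof is correct and the overall architecture matches the paper's: construct currents on $\Gamma$ from the affine functions of Lemma~\ref{Acover.lemma} on $\Gamma_{\rm T}$, extend by zero on the pendants, apply Lemma~\ref{l21.lemma} and sum over $\alpha$ to reduce to showing $\int_{\Gamma_{\rm T}}(\phi_1')^2 \le \lambda_1\int_{\Gamma_{\rm T}}\phi_1^2$. The sufficient-condition argument is also the same as the paper's.

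The one genuine difference is in how you establish that last inequality. The paper integrates by parts on $\Gamma_{\rm T}$ to produce the boundary term $\sum_j \phi_1(q_j)\,\del_{P_j}\phi_1(q_j)$, and then uses Green's identity on each $P_j$ against the ground state $u_j$ of $P_j$ to show $\del_{P_j}\phi_1(q_j)\le 0$. You instead split the global energy identity $\int_\Gamma(\phi_1')^2=\lambda_1\int_\Gamma\phi_1^2$ over $\Gamma_{\rm T}$ and the pendants, and bound each pendant piece by feeding $\phi_1|_{P_j}$ directly into the Rayleigh quotient for $\lambda_1(P_j)$. Your route is a bit more elementary: it avoids the boundary-term bookkeeping and never needs to invoke the eigenfunction $u_j$ of $P_j$, only the variational characterization of $\lambda_1(P_j)$. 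The paper's approach, on the other hand, isolates the sign of the inward derivative $\del_{P_j}\phi_1(q_j)$ as a standalone fact, which is a slightly sharper piece of information about $\phi_1$ than what your argument extracts.
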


\begin{figure}
\begin{overpic}[scale=.5]{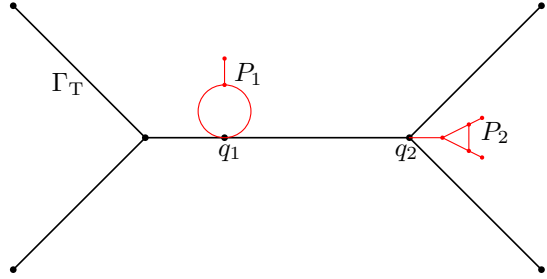}
\put(8,34){$\Gamma_{\rm T}$}
\put(39,22){$q_1$}
\put(42,36){$P_1$}
\put(72,22){$q_2$}
\put(88,25){$P_2$}
\end{overpic}
\caption{An ornamented tree with two pendants.}\label{pendant.fig}
\end{figure}

\begin{proof}
Let $\set{g_\alpha} \in \calA(\Gamma_{\rm T})$ be the trio of functions obtained by Lemma~\ref{Acover.lemma}.
We define corresponding current functions on $\Gamma$ by setting $\eta_\alpha = \abs{g_\alpha'}$ on edges
of $\Gamma_T$ and $\eta_\alpha = 0$ on each pendant $P_j$.  This gives
\[
\frac12 \sum_{\alpha=1}^3 \norm{\eta_\alpha \phi_1}^2 = \int_{\Gamma_{\rm T}} \phi_1^2,
\]
and
\[
\frac12 \sum_{\alpha=1}^3 \norm{\eta_\alpha \phi_1'}^2 = \int_{\Gamma_{\rm T}} (\phi_1')^2.
\]
Thus, by Lemma~\ref{l21.lemma},
\begin{equation}\label{gt.ratio}
\lambda_2 - \lambda_1 \le 4 \frac{\int_{\Gamma_{\rm T}} (\phi_1')^2}{\int_{\Gamma_{\rm T}} \phi_1^2}.
\end{equation}
An integration by parts gives
\begin{equation}\label{pj.parts}
\int_{\Gamma_{\rm T}} (\phi_1')^2 = \lambda_1 \int_{\Gamma_{\rm T}} \phi_1^2 + \sum_{j=1}^m \phi_1(q_j) \del_{P_j} \phi_1(q_j),
\end{equation}
where $\del_{P_j} \phi_1(q_j)$ denotes the sum of derivatives of $\phi_1$ at $q_j$ into the incident edges of $P_j$.

Now for each $j$ let $u_j$ denote the first eigenfunction of $P_j$, with standard vertex conditions imposed at $q_j$,
so that $u_j \ge 0$ on $P_j$ and 
\[
-\Delta u_j = \lambda_1(P_j) u_j.
\]
By Green's identity, and the fact that $\del_{P_j} u_j(q_j) = 0$,
\[
\int_{P_j} (- \phi_1 \Delta u_j + u_j \Delta \phi_1) = - u_j(q_j) \del_{P_j} \phi_1(q_j).
\]
On the other hand,
\[
\int_{P_j} (- \phi_1 \Delta u_j + u_j \Delta \phi_1) = (\lambda_1(P_j) - \lambda_1) \int_{P_j} \phi_1 u_j.
\]
Since the eigenfunctions are positive, we conclude that $\lambda_1(P_j) - \lambda_1 \ge 0$ implies that
\[
\del_{P_j} \phi_1(q_j) \le 0.
\]
Under this assumption, \eqref{pj.parts} gives
\[
\int_{\Gamma_{\rm T}} (\phi_1')^2 \le \lambda_1 \int_{\Gamma_{\rm T}} \phi_1^2,
\]
and it follows from \eqref{gt.ratio} that $\lambda_2/\lambda_1 \le 5$.

By the general bounds \eqref{std.lower} and \eqref{lmax.bound}, the eigenvalue condition will hold provided
\[
\abs{P_j} \le \frac12 \ell_{\rm max}(\Gamma).
\]
Clearly it suffices to compute $\ell_{\rm max}$ over $\Gamma_{\rm T}$ rather than $\Gamma$.
\end{proof}


\end{document}